 \newtheorem{thm}{Theorem}[section]
\newtheorem{theorem}[thm]{Theorem}
\newtheorem{prop}[thm]{Proposition}
\newtheorem{proposition}[thm]{Proposition}
\newtheorem{lemma}[thm]{Lemma}
\newtheorem{rmk}[thm]{Remark}
\newtheorem{question}[thm]{Question}
\newtheorem{definition}[thm]{Definition}
\newtheorem{corollary}[thm]{Corollary}
\title[Comparing K\"{a}hler cone and symplectic cone]{Comparing K\"{a}hler cone and symplectic cone of one-point blowup of Enriques surface}
\author{Shengzhen Ning}
\begin{document}
\maketitle
\begin{abstract}
    We follow the study by Cascini-Panov (\cite{cascinipanov}) on symplectic generic complex structures on K\"{a}hler surfaces with $p_g=0$, a question proposed by Tian-Jun Li in \cite{spaceofsymp}, by demonstrating that the one-point blowup of an Enriques surface admits non-Kähler symplectic forms. This phenomenon relies on the abundance of elliptic fibrations on Enriques surfaces, characterized by various invariants from algebraic geometry. We also provide a quantitative comparison of these invariants to further give a detailed examination of the distinction between Kähler cone and symplectic cone.

\end{abstract}
\pagenumbering{arabic}
\tableofcontents
\section{Introduction}

Let $(X,\omega)$ be a closed symplectic manifold. It is called \emph{non-K\"{a}hler} if $\omega$ is not compatible with {\bf any} complex structure $J$ on $X$. In other words, $\omega(\cdot,J\cdot)$ doesn't define a Riemannian metric on $X$. Let's emphasize that the notion `non-K\"{a}hler' here should be understood to describe the symplectic manifold $(X,\omega)$ or the symplectic form $\omega$ rather than only the manifold\footnote{By manifold, we always mean a {\bf smooth oriented closed} manifold. The symplectic, complex or K\"{a}hler structure has to be compatible with the orientation.} $X$.

A well-known fact in symplectic geometry is that the space of compatible almost complex structures is always non-empty and contractible. Sometimes it's also worthwhile to investigate its subspace consisting of the integrable ones. For example, when $X$ is a rational ruled surface, \cite{AGKduke} shows that the inclusion of this subspace is a weak homotopy equivalence (in particular, non-empty) which plays an essential role in their study of the topology of the symplectomorphism groups. Nevertheless, it is possible for this subspace consisting of compatible complex structures to be empty for many other symplectic manifolds. The first such example is due to Thurston. In \cite{Thurston76}, he revisited Kodaira's example of a complex surface with odd first Betti number, which is topologically a torus bundle over torus. Since then more and more examples have been discovered with different techniques showing the non-K\"{a}hlerness in both dimension $4$ and higher dimensions. The first simply connected example was given by \cite{McDnonKahler} in higher dimension using symplectic blowup operation. Now, setting aside the variety of examples found in higher dimensions, we will concentrate on dimension 4 and the known examples\footnote{Obviously, it is unrealistic to give a complete list all the known examples here. I feel sorry for the authors whose examples are not mentioned here due to my limited knowledge. } can actually be classified (very roughly) into three categories. 
\begin{enumerate}
   \item \emph{The manifold $X$ admits no complex structure.} In this category, one can apply results from the theory of complex surface such as Kodaira's classification and Noether's inequality. For example, the solvmanifolds in \cite{solvmanifold} and the manifolds constructed from fiber sum operations in \cite{Gompffibersum}. When gauge theory came into play, there were more examples such as the homotopy K3 surfaces in \cite{GompfMrowka} using Donaldson's invariants or the knot surgery manifolds $E(1)_K$ for some fibered knot $K$ in \cite{Jparknonsympcomp} using Seiberg-Witten invariants, which share the feature that the smooth structures are exotic.

    \item \emph{The manifold $X$ can either admit complex structure or not, but it admits no K\"{a}hler structure.} The first example \cite{Thurston76} belongs to this category, where the topological constraint of even first Betti number was used. Other than the parity of Betti numbers, other topological constraints on K\"{a}hler manifolds such as formality, fundamental group and hard Lefschetz property can also be used to construct examples in this category. For instance, all nilmanifolds other than $T^4$ (see \cite{Rationalhomotopybook}), Gompf's symplectic manifolds with $\pi_1$ not satisfying the condition like \cite{JRpioneofkahler} and some $S^1$-bundles over mapping torus with degenerate cup product on $H^1$ constructed in \cite{Baldridgekod1,BaldLi}.

    \item \emph{The manifold $X$ admits K\"{a}hler structure.} In this case, one has to construct a symplectic form on the manifold underlying a K\"{a}hler surface and then argue it can not be the K\"{a}hler form of any complex structure. \cite{Draghici} showed for all $b_2^+>1$ minimal K\"{a}hler surface of general type, one can deform the K\"{a}hler form in the direction along the real part of holomorphic $2$-form to obtain a symplectic form which will be non-K\"{a}hler due to Hodge index theorem. More examples in this category include $T^4\#\overline{\mathbb{CP}^2}$ in \cite{T4}, and $(S^2\times T^2)\#\overline{\mathbb{CP}^2}$ and Burniat surface (of general type and $b_2^+=1$) in \cite{cascinipanov}.
\end{enumerate}

Note that subtlety exists in the above classification.  For instance, there are examples that deviate from the topological constraints of K\"{a}hler manifolds but with even first Betti number $b_1$. This would eventually imply $X$ admits no complex structure due to \cite{Buchdahl,Lamari}. However we are more willing to put it into category (2) rather than (1) since the constraints really come from the property of being K\"{a}hler. Moreover, the hard Lefschetz property, though we stated it in category (2), can be employed to build examples in category (3) in higher dimension. See \cite{ChoHardLef} for his example of an $S^2$-bundle over the K3 surface which admits K\"{a}hler structure and also symplectic forms of non-hard Lefschetz type. It's interesting to see whether one can still find such an example in dimension $4$.

It seems that examples in the category (3) are the most limited among the three. The purpose of this paper is to add another example into this category, which is the one point blowup of Enriques surface as suggested by the title. The motivation for studying such an example is that the authors of \cite{cascinipanov} forgot to add the minimal assumption in their statement of Proposition 4.2 (although they added the assumption in their Question 4.1). Moreover, in the end of \cite{cascinipanov} the authors were asking for an example with finite fundamental group. And Enriques surface has fundamental group $\mathbb{Z}_2$.

Our approach of showing non-K\"{a}hlerness is in the same spirit of \cite{T4} and \cite{cascinipanov}. By combining results from both algebraic geometry and symplectic geometry, we can compare the K\"{a}hler cone with the symplectic cone which we now define. For a manifold $X$, we define its \emph{symplectic cone} to be 
\[\mathcal{C}(X):=\{x\in H^2(X;\mathbb{R})\,|\,x\text{ is represented by a symplectic form}\}.\]If $J$ is a complex structure on $X$, then the $J$-\emph{K\"{a}hler cone} and \emph{K\"{a}hler cone} are defined to be 
\[\mathcal{K}_J(X):=\{x\in H^2(X;\mathbb{R})\,|\,x\text{ is represented by a symplectic form compatible with }J\},\]
\[\mathcal{K}(X):=\bigcup_{\text{all complex structures } J} \mathcal{K}_J(X).\]
We use $S$ to denote the Enriques surface and $\tilde{S}$ its blowup. Then our main result can be formulated by the following.
\begin{theorem}\label{thm:mainthm1}
   $\mathcal{C}(\tilde{S})\neq \mathcal{K}(\tilde{S}).$
\end{theorem}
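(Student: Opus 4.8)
The plan is to prove $\mathcal{C}(\tilde S)\neq\mathcal{K}(\tilde S)$ by computing both cones inside $H^2(\tilde S;\mathbb{R})$ and exhibiting a class that lies in the symplectic cone but in \emph{no} $J$-Kähler cone. Since $\mathcal{K}(\tilde S)\subseteq\mathcal{C}(\tilde S)$ always holds, it suffices to produce a single symplectic class that is not a Kähler class for any complex structure. Throughout I would work with the splitting $H^2(\tilde S;\mathbb{R})\cong H^2(S;\mathbb{R})\oplus\mathbb{R}\langle E\rangle$ of signature $(1,10)$, where $E^2=-1$ is the exceptional class and $H^2(S;\mathbb{R})$ carries the Enriques form $U\oplus E_8(-1)$ of signature $(1,9)$; note that since $K_S$ is $2$-torsion, the canonical class of $\tilde S$ equals $E$ in real cohomology and $c_1=-E$.

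First I would pin down the symplectic cone. As $\tilde S$ has $b^+=1$ and $b_1=0$ while its minimal model $S$ has symplectic Kodaira dimension $0$, the Li--Liu description of the symplectic cone of $4$-manifolds with $b^+=1$ applies: $\mathcal{C}(\tilde S)$ is the set of classes of positive square in the forward cone $\mathcal{P}^+$ that pair positively with every \emph{symplectic exceptional class}. Hence the essential input is the set of classes represented by embedded symplectic $(-1)$-spheres, which I would determine from the minimal model together with Taubes' identification of such classes with nonzero Gromov invariants; because $S$ is minimal of Kodaira dimension $0$, I expect these classes to be comparatively rigid, so that $\mathcal{C}(\tilde S)$ is carved out by relatively few walls.

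Next I would describe the Kähler cone. Using that Seiberg--Witten invariants are diffeomorphism invariants, I would argue that every complex structure $J$ on $\tilde S$ has minimal model an Enriques surface, so $J$ is a one-point blowup of some Enriques surface with exceptional curve $\mathcal{E}_J$ and $K_J=\mathcal{E}_J$ numerically; its Kähler cone $\mathcal{K}_J$ is then the nef cone, cut out inside $\mathcal{P}^+$ by the $(-2)$-curves, by the exceptional $(-1)$-curve $\mathcal{E}_J$, and---through the abundance of elliptic fibrations on the Enriques surface---by the $(-1)$-classes $\pi^*F-\mathcal{E}_J$ attached to the isotropic half-fiber classes $F$ with $F^2=0$. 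The global Torelli theorem and surjectivity of the period map for Enriques surfaces would let me parametrize all such $J$ and track how these walls move, making $\mathcal{K}(\tilde S)=\bigcup_J\mathcal{K}_J$ computable as a subset of $\mathcal{P}^+$.

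Finally I would compare the two cones. The mechanism I expect is that the infinitely many half-fiber classes $F$ force, for every $J$, a Kähler wall $\alpha\cdot(\pi^*F-\mathcal{E}_J)>0$, whereas only a proper subset of the classes of the form $\pi^*F-E$ are symplectic exceptional; a quantitative comparison of these isotropic-class invariants should then locate a direction in $\mathcal{P}^+$ that survives all symplectic exceptional walls yet is excluded from every $\mathcal{K}_J$. The main obstacle is precisely this last step: leaving the Kähler cone of a single $J$ is easy, but one must defeat all complex structures simultaneously, which requires combining the lattice arithmetic of $U\oplus E_8(-1)\oplus\langle-1\rangle$, the rigidity of the canonical class $K_J=\mathcal{E}_J$, and the global Torelli theorem to show that no blown-up Enriques structure can realize the chosen class as an ample class.
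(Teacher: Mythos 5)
Your overall strategy---determine $\mathcal{C}(\tilde S)$ via Li--Liu/Taubes--Seiberg--Witten theory, bound every $\mathcal{K}_J(\tilde S)$ using curves forced by the elliptic fibrations on the Enriques surface, and then compare---is the same as the paper's. But there are two genuine gaps. First, your description of the K\"ahler walls is wrong in exactly the place where the comparison becomes delicate. An isotropic primitive class $F$ on an Enriques surface is a \emph{half-fiber}: generically $h^0(F)=1$, so the unique curve in $|F|$ misses a generic point, and the class $\pi^*F-\mathcal{E}_J$ is simply not effective when one blows up a point off that curve. What does pass through every point is a member of the pencil $|2F|$ (this is Lemma \ref{lem:2D}: either $D$ or $2D$ gives an elliptic pencil), so the wall that constrains \emph{every} blowup of \emph{every} Enriques structure is $2\pi^*F-\mathcal{E}_J$, not $\pi^*F-\mathcal{E}_J$. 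Your walls would make $\mathcal{K}(\tilde S)$ artificially small and the separation spuriously easy; with the correct walls one only gets the bound $\mathcal{K}\leq 2\Phi$ (Proposition \ref{prop:2phi}), and whether the symplectic cone actually exceeds \emph{that} is a nontrivial numerical question. Note also that none of the classes $\pi^*F-e$ is a symplectic exceptional class (it pairs to $-1$ with $c_1$, so it has genus $1$, not $0$); the symplectic cone is cut only by $\pm e$, so the symplectic side is as large as it could be --- that part of your picture is essentially right.

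Second, the step you yourself flag as the main obstacle --- producing a class in $\mathcal{C}(\tilde S)\setminus\bigcup_J\mathcal{K}_J(\tilde S)$ --- is not carried out, and it is precisely where the content lies. The paper resolves it by reducing to the normalized fundamental chamber $\Delta$ for the $\mathrm{Diff}^+(S)\times\mathbb{R}^+$ action (Lemma \ref{lem:fundamentaldomain}), computing the $\Phi$-invariant explicitly there as $\Phi(b_1,\dots,b_{10})=b_{10}$ (Proposition \ref{prop:phi=b10}, a concrete lattice/Cauchy--Schwarz argument over the nine vertices of $\overline{\Delta}$), identifying $\Phi$ with $c^{\mathrm{alg}}_{0,J}$ uniformly in $J$ via Torelli and the nef-cone normalization (Lemma \ref{lem:nefcone}, Proposition \ref{prop:phi=algcapa}), and then exhibiting the explicit point $(\tfrac13,\dots,\tfrac13,\tfrac14,\tfrac1{12})$ where $(1-\sum b_i^2)^{1/2}>2b_{10}$. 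Without the fundamental-domain reduction and the explicit computation of $\Phi$, ``defeating all complex structures simultaneously'' remains an unproved assertion; your proposal identifies the right ingredients but does not show that the inequality actually comes out in the symplectic cone's favor.
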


\begin{rmk}
Obviously, the distinction between K\"{a}hler cone and symplectic cone implies the existence of non-K\"{a}hler symplectic form. However it remains unclear whether the equality of two cones would imply all symplectic forms are K\"{a}hler. For Kodaira dimension $-\infty$ manifolds, this implication holds true since all cohomologous symplectic forms are diffeomorphic (see \cite{spaceofsymp}). For Kodaira dimension $\geq 0$ manifolds, less is known but there is a speculation by Donaldson that one can attack this problem via almost K\"{a}hler Calabi-Yau equation (see \cite{Donaldsontwoform}).
\end{rmk}

Our example has $b_2^+=1$. Under such circumstances, the symplectic cone and each $J$-K\"{a}hler cone are both open in $H^2(X;\mathbb{R})$. Moreover, \cite{LiLiucone} shows that $\mathcal{C}(X)$ is a disjoint union of $\mathcal{C}_K(X)$ for all $K\in H^2(X;\mathbb{Z})$, where 
\[\mathcal{C}_K(X):=\{x\in \mathcal{C}(X)\,|\,x\text{ is represented by a symplectic form }\omega \text{ with }c_1(X,\omega)=-K\}.\]
\cite{spaceofsymp} introduced the concept of the so-called \emph{symplectic generic complex structure}, which just means the complex structure $J$ with its canonical class $K_J$ satisfies $\mathcal{C}_{K_J}(X)=\mathcal{K}_J(X)$. \cite[Question 4.6]{spaceofsymp} further asked about the existence of a symplectic generic complex structure on a $b_2^+=1$ manifold admitting K\"{a}hler structure. Since we prove that even the union of all $\mathcal{K}_J(\tilde{S})$'s can not be the entire $\mathcal{C}(\tilde{S})$, the manifold $\tilde{S}$ doesn't admit symplectic generic complex structure by uniqueness of symplectic canonical class on $S$ (see the beginning of Section \ref{section:symp}). Due to the examples given by \cite{cascinipanov} and our Theorem \ref{thm:mainthm1}, we would like to update the question here.

\begin{question}[Refinement of \cite{spaceofsymp} Question 4.6]\label{question:refine}
     Which manifolds $X$, underlying a minimal K\"{a}hler surface with $b_2^+(X)=1$, allow for demonstrating the absence of a symplectic generic complex structure on the blowup manifold $X\#n\overline{\mathbb{CP}^2}$ for some $n\geq 0$?
\end{question}

\begin{rmk}
    Question \ref{question:refine} is open for $X=\mathbb{CP}^2$, which is related to some famous conjectures on rational surfaces such as Nagata conjecture and SHGH conjecture. We refer to \cite{Biranduke} for the exploration in this direction, including the relation to symplectic packing problems.
\end{rmk}

Now we briefly explain the reason why the blowup of Enriques surface could serve as an example of non-K\"{a}hler symplectic manifold. The proof of Theorem \ref{thm:mainthm1} will rely on the abundance of elliptic fibrations on Enriques surface. Roughly speaking, this means when the manifold $S$ is equipped with any complex structure $J$, one can always think of $S$ covered by an $S^2$-family of $J$-holomorphic tori. The proper transforms of these tori in the blowup manifold $(\tilde{S},\tilde{J})$ will provide the obstructions for a cohomology class of $\tilde{S}$ being K\"{a}hler. On the other hand, when the complex structure $\tilde{J}$ is deformed into a generic almost complex structure, these proper transforms will not persist by Gromov's theory in symplectic geometry. Hence, the classes of these proper transforms don't provide obstructions for a class being symplectic, which cause the distinction between K\"{a}hler cone and symplectic cone.

To make the previous illustration more precise, we need a quantitative characterization of the non-K\"{a}hlerness which is encoded by the relations among several important invariants coming from algebraic geometry. Assume $L$ is a nef line bundle on an Enriques surface $S$. These invariants, introduced by various individuals at different times and for distinct purposes, are (see Section \ref{section:threeinvariants} for more precise definitions)
\begin{itemize}
    \item $\Phi$-invariant introduced in \cite{Cossec} $$\Phi(L):=\inf \{L\cdot C\,|\,C^2=0,C\in \text{Pic}(S)\text{ is effective}\}.$$
    \item Seshadri constant introduced in \cite{Demailly}
    \[\varepsilon_s(S,L):=\inf\{\frac{L\cdot C}{\nu (C,s)}\,|\,C \text{ is an irreducible curve passing through }s\},\]
    where $s$ is a point on $S$.
    \item  Algebraic capacities introduced in \cite{Worm1,Worm2}
    $$c_{k}^{\text{alg}}(S,L):=\inf \{L\cdot C\,|\,C^2-K_S\cdot C\geq 2k,C\in \text{Pic}(S)\text{ is nef}\},$$where $k$ is an integer and $K_S$ denotes the canonical class.
\end{itemize}
Although they are defined for line bundles, in Section \ref{section:smooth} we will have a polytope $\Delta$ (polyhedra cone $\tilde{\Delta}$) of dimension nine (ten) which is a fundamental domain under the diffeomorphism group and $\mathbb{R}^+$-rescaling (without $\mathbb{R}^+$-rescaling) actions on $H^2(S;\mathbb{R})$, where all these invariants can be viewed as functions and compared. This will be convenient for the computational purposes. Our Theorem \ref{thm:mainthm1} actually follows from the following comparison results.
\begin{theorem}\label{thm:mainthm2}
  Suppose the ample cone of the Enriques surface contains the interior of the $\tilde{\Delta}$, then the $\Phi$-invariant, Seshadri constant and algebraic capacities can be naturally viewed as functions over $\Delta$ (or $\tilde{\Delta}$) such that $$\sup _{s\in S}\varepsilon_s\leq 2c_0^{\text{alg}}=2\Phi.$$
\end{theorem}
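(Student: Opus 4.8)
The plan is to establish the two assertions separately: the identity $c_0^{\mathrm{alg}}=\Phi$ and the inequality $\sup_{s}\varepsilon_s\le 2\Phi$. Throughout I would first record the one structural feature that makes everything uniform, namely that on an Enriques surface the canonical class $K_S$ is $2$-torsion and hence numerically trivial, so that $K_S\cdot C=0$ for every $C$ and the defining condition $C^2-K_S\cdot C\ge 0$ of $c_0^{\mathrm{alg}}$ collapses to $C^2\ge 0$. This lets me treat $c_0^{\mathrm{alg}}(S,L)=\inf\{L\cdot C\mid C\text{ a nonzero nef class},\,C^2\ge 0\}$.

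For the identity $c_0^{\mathrm{alg}}=\Phi$ I would prove the two inequalities. The easy direction is $c_0^{\mathrm{alg}}\le\Phi$: given an effective $C$ with $C^2=0$, take its Zariski decomposition $C=P+N$ with $P$ nef, $N$ effective and negative, and $P\cdot N=0$; then $P^2=C^2-N^2=-N^2\ge 0$ and, since $L$ is nef and $N$ effective, $L\cdot P\le L\cdot C$, so $P$ is an admissible competitor for $c_0^{\mathrm{alg}}$ of no larger $L$-degree. Taking the infimum gives $c_0^{\mathrm{alg}}\le\Phi$. The reverse inequality $\Phi\le c_0^{\mathrm{alg}}$ is the heart of the matter: I must show that a nef class $C'$ with $(C')^2\ge 0$ achieving the infimum can be replaced by an effective isotropic class of no larger $L$-degree. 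If $(C')^2=0$ this is immediate, since a nef primitive isotropic class is (after possibly twisting by $K_S$) the half-fiber of a genus-one fibration and hence effective with the same $L$-degree. If $(C')^2>0$, I would subtract a suitable half-fiber $f$ and apply Riemann--Roch to $D=C'-f$: the square $D^2$ stays nonnegative because $\Phi(C')^2\le (C')^2$, and Riemann--Roch together with the ampleness of $L$ forces $D$ rather than $K_S-D$ to be effective, producing an isotropic effective class of strictly smaller $L$-degree. This contradicts minimality unless $(C')^2=0$, so the infimum is attained on an isotropic class, giving $\Phi\le c_0^{\mathrm{alg}}$.

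For the Seshadri bound I would use the abundance of elliptic fibrations directly. Let $f$ be a nef primitive isotropic class realizing $\Phi(L)=L\cdot f$; such $f$ exists by the reduction above and is the half-fiber of a genus-one pencil $|2f|$ whose general member has class $2f$. Since this pencil covers $S$, every point $s$ lies on a fiber; choosing an irreducible component $C_s$ through $s$, nefness of $L$ gives $L\cdot C_s\le L\cdot(2f)=2\Phi$, while $\nu(C_s,s)\ge 1$. Hence $\varepsilon_s\le (L\cdot C_s)/\nu(C_s,s)\le 2\Phi$ for every $s$, and taking the supremum yields $\sup_s\varepsilon_s\le 2\Phi$.

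I expect the main obstacle to be the reverse inequality $\Phi\le c_0^{\mathrm{alg}}$, specifically controlling the $L$-degree of the half-fiber $f$ subtracted from a big nef minimizer so that both $D^2\ge 0$ and $L\cdot D\ge 0$ hold simultaneously; this is where I would lean on the lattice-theoretic structure theory of Enriques surfaces (\cite{Cossec}) and the hypothesis that the ample cone contains the interior of $\tilde{\Delta}$, which guarantees the relevant classes are genuinely ample and that all three invariants descend to well-defined functions on $\Delta$.
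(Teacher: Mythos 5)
Your second half --- the Seshadri bound --- is essentially the paper's own argument (Proposition \ref{prop:2phi}): a nef effective primitive isotropic $f$ has $|f|$ or $|2f|$ an elliptic pencil (Lemma \ref{lem:2D}), the pencil covers $S$, and nefness of $L$ bounds the degree of a component through $s$ by $L\cdot 2f=2\Phi(L)$. That part is fine. The first half is where you diverge from the paper, and it contains a genuine gap. The paper does not argue abstractly at all: it proves $\Phi=c_0^{\mathrm{alg}}$ (Propositions \ref{prop:phi=b10} and \ref{prop:phi=algcapa}) by exhibiting the explicit common minimizer $s_1=3l_0-l_1-\cdots-l_9$, which is nef because it lies on the boundary of $\tilde{\Delta}$ and the ample cone contains $\mathrm{Int}(\tilde{\Delta})$, and then ruling out any better competitor (isotropic or of positive square) by convexity of $\overline{\Delta}$ and a Cauchy--Schwarz estimate at each of the nine vertices. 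Your route must instead establish, for \emph{every} nef integral $C'$ with $(C')^2>0$, that $L\cdot C'\geq \Phi(L)$, and the step you propose --- subtract a half-fiber $f$, set $D=C'-f$, and apply Riemann--Roch --- does not do this: $D$ is not isotropic in general ($D^2=(C')^2-2C'\cdot f$ need not vanish), effectivity of $D$ does not by itself compare $L\cdot D$ or $L\cdot f$ with $L\cdot C'$, and no induction is actually closed. The inequality you invoke, ``$\Phi(C')^2\leq (C')^2$,'' is applied to the wrong class. (A correct abstract closure does exist: Cossec's inequality $\Phi(L)^2\leq L^2$ applied to $L$ itself, together with the Hodge index inequality $(L\cdot C')^2\geq L^2\,(C')^2\geq 2L^2$ for an integral nef $C'$ of positive square in the even Enriques lattice, gives $L\cdot C'\geq\sqrt{2}\,\Phi(L)$. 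But that requires importing Cossec's theorem, which you neither state nor prove, and it is not what your written argument does.)

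Two smaller points. In your ``easy'' direction $c_0^{\mathrm{alg}}\leq\Phi$, the Zariski decomposition $C=P+N$ is in general only a $\mathbb{Q}$-divisor decomposition, so $P$ need not lie in $\mathrm{Pic}(S)$ as the definition of $c_0^{\mathrm{alg}}$ requires; on an Enriques surface one should instead use the standard reduction of an effective isotropic class to a nef effective isotropic (canonical) class of no larger degree, or simply observe, as the paper does, that the single class $s_1$ is simultaneously effective, nef and isotropic, so it is a competitor for both infima. Second, your Seshadri argument presupposes that the minimizer of $\Phi$ can be taken nef; this is exactly what the identification $\Phi=c_0^{\mathrm{alg}}$ (equivalently, the nefness of $s_1$ under the hypothesis on the ample cone) supplies, so the two halves of your proof are not independent and the gap in the first half propagates to the second.
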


We end this introduction section by a digression on the $b_2^+>1$ case.
\begin{rmk}
    When $b^+_2(X)>1$ each $\mathcal{K}_J(X)$ is contained in a proper linear subspace $H_J^{1,1}(X;\mathbb{R})\subseteq H^2(X;\mathbb{R})$. Although it doesn't make sense to talk about symplectic generic complex structure, one can still ask whether $\mathcal{C}(X)=\mathcal{K}(X)$ or not. Note that by \cite{Draghici} and the fact that Kodaira dimension $-\infty$ manifolds have $b_2^+=1$, we only need to consider Kodaira dimension $0$ or $1$. For torus bundle over torus, \cite{GeigesT2bundle} fully determined the symplectic cone. \cite{T4} studied the unobstructedness of ball packing for $T^4$ and obtained the information about $\mathcal{C}(X)$ for one-point blowup of $T^4$. This was further generalized by \cite{EV1} to manifolds admitting IHS hyperk\"{a}hler structures. For elliptic surfaces $E(m)$ with $m>2$, \cite{Hamiltonconeofellipticsurface} obtained some partial information about $\mathcal{C}(X)$. See also \cite{LiUsher} for their effort in understanding $\mathcal{C}(X)$ by doing negative inflations. On the other hand, much less is known for $\mathcal{K}(X)$. \cite{T4} verified $\mathcal{K}(X)$ is strictly smaller than $\mathcal{C}(X)$ for one-point blowup of $T^4$ by utilizing results regarding the Seshadri constant on  principally polarized abelian surfaces . 
\end{rmk}

\section{Enriques surface as smooth manifold}\label{section:smooth}
\subsection{Some standard facts and notations}

Of course, we have to firstly give the most indispensable definition in this paper.
\begin{definition}\label{def:enriques}
    An Enriques surface is a compact complex surface satisfying 
    \begin{itemize}
        \item (regularity) $q:=\rm{dim}$$H^1(\mathcal{O})=0$;
        \item (non-trivial canonical bundle) $K \neq \mathcal{O}$;
        \item ($2$-torsion canonical bundle) $K^{\otimes 2}=\mathcal{O}$.
    \end{itemize}
\end{definition}
We recall some standard facts and refer to \cite{BHPV,FMbook,IntrotoEnr} for more details. Firstly, due to the fundamental result by Horikawa (\cite{Horikawa}), all Enriques surfaces are deformation equivalent as complex surfaces, hence oriented diffeomorphic. We thus denote by $S$ the underlying closed oriented smooth $4$-manifold. There are numerous methods available for constructing such a manifold. Here we only list few of them from the topological perspective.

\begin{itemize}
    \item (\cite{GSbook}) Quotient of the complete intersection $S(2,2,2)\subseteq \mathbb{CP}^5$, namely the K3 surface defined by quadratic polynomial equations $p_i(z_0,z_1,z_2)+q_i(z_3,z_4,z_5)=0$ for $i=1,2,3$, by the $\mathbb{Z}_2$-action of the free involution $[z_0:z_1:z_2:z_3:z_4:z_5]\mapsto [z_0:z_1:z_2:-z_3:-z_4:-z_5]$.
    \item (\cite{GSbook}) Logarithmic transform $E(1)_{2,2}$ of the rational elliptic surface  $E(1)=\mathbb{CP}^2\#9\overline{\mathbb{CP}^2}$.
    \item (\cite{UsherKod}) Fiber sum $E(1)\#_{T'=T} S^2\times T^2$ along the embedded tori $T'\subseteq E(1)$ and $T\subseteq S^2\times T^2$ representing the anti-canonical class.  
    \item (\cite{SeppiKodfibersum}) Rational blowdown of a $(-4)$-sphere within the homology class $6H-\sum_{i=1}^{10} E_i$ in $\mathbb{CP}^2\#10\overline{\mathbb{CP}^2}$, where $H$ is the line class and $E_i$'s are exceptional classes.
    \item (\cite{LeungSymington}) Torus fibration over the base $\mathbb{RP}^2$ with $12$ nodal singular fibers.
\end{itemize}

\begin{rmk}
    Note that \cite[Lemma 3.2]{cascinipanov} shows any complex surface {\bf homeomorphic} to $(T^2\times S^2)\#\overline{\mathbb{CP}^2}$ must be biholomorphic to the blowup of a minimal elliptic ruled surface. However, the same statement doesn't hold for Enriques surface since \cite{fakeenriques} shows that for relatively prime, odd, positive integers $(p,q)\neq (1,1)$, the logarithmic transforms $E(1)_{2p,2q}$ of the rational elliptic surface are complex surfaces homeomorphic but not diffeomorphic to $S$ (which are called the fake Enriques surfaces). 
\end{rmk}

\begin{rmk}
    If we reverse the orientation of $S$ to get $\hat{S}$, then it is neither complex nor symplectic. On the one hand, from $c_1^2(\hat{S})=3\sigma(\hat{S})+2e(\hat{S})=3\cdot 8+2\cdot 12=48>36=3c_2(\hat{S})$ one sees that $\hat{S}$ can not be complex by Bogomolov–Miyaoka–Yau inequality. On the other hand, as we will see later, $S$ contains embedded $(-2)$-spheres so that $\hat{S}$ will contain embedded $(+2)$-spheres. Then by \cite[Proposition 5.1]{FSimmersed} or \cite[Proposition 1]{Kotschickori} all Seiberg-Witten invariants of $\hat{S}$ vanish, which is impossible for a symplectic manifold by \cite{Taubesmoreconstraints}.
\end{rmk}
By the previous two remarks, we emphasize that throughout our discussions, by $S$ we mean the {\bf smooth oriented} manifold not priorly equipped with any geometric structure.
The basic topological invariants of $S$ are summarized by the following chart.
\begin{center}
\begin{tabular}{ |c|c|c|c|c|c| } 
 \hline
 $\pi_1(S)$ & $H^2(S;\mathbb{Z})$ & $Q_S$ & $b_2^+(S)$ & $e(S)$ & $\sigma(S)$ \\ 
  \hline
 $\mathbb{Z}_2$ & $\mathbb{Z}^{10}\oplus \mathbb{Z}_2$ & $-E_8\oplus U$ & $1$ & $12$ & $-8$ \\ 
  \hline

\end{tabular}
\end{center}

The rank $10$ even unimodular lattice $-E_8\oplus U$, which is the intersection form of $S$, is called the \emph{Enriques lattice}. Following the notation of \cite{Cossec}, we always choose a basis $\{r_0,r_1,\cdots,r_7,s_1,s_2\}$  for this lattice, where $r_0,r_1,\cdots,r_7$ all have square $-2$ and form a basis of $-E_8$ with Dynkin diagram

\begin{center}
    \dynkin[labels={r_1,r_0,r_2,r_3,r_4,r_5,r_6,r_7},
edge length=.75cm]E8
\end{center}
and $s_1,s_2$ are the basis for the hyperbolic plane $U$ with $s_1^2=s_2^2=0$, $s_1\cdot s_2=1$. Let $I_{1,10}$ be the unimodular lattice of rank $11$ coming from the intersection form of $\mathbb{CP}^2\#10\overline{\mathbb{CP}^2}$. Take the basis $\{l_0,l_1,\cdots,l_{10}\}\subseteq I_{1,10}$ such that $l_0^2=1,l_1^2=\cdots=l_{10}^2=-1$ and $l_i\cdot l_j=0$ for all $i\neq j$. We use $k$ to denote the element $-3l_0+l_1+\cdots+l_{10}\in I_{1,10}$. Let $-\mathds{1}_e$ be the rank $1$ lattice with $e^2=-1$. Then there is an isomorphism $$\Psi : -\mathds{1}_e\oplus -E_8\oplus U\rightarrow I_{1,10}$$ by sending
\[e\mapsto -k,\]
\[r_0\mapsto l_0-l_1-l_2-l_3,\]
\[r_i\mapsto l_i-l_{i+1},\,1\leq i\leq 7,\]
\[s_1\mapsto 3l_0-l_1-\cdots-l_8-l_9,\]
\[s_2\mapsto 3l_0-l_1-\cdots-l_8-l_{10}.\]
We also let $r_8:=l_8-l_9$ and $r_9:=l_9-l_{10}$ be two elements in the Enriques lattice under the above identification. After tensoring with $\mathbb{R}$, this isomorphism $\Psi$ can be upgraded into an isomorphism $$\Psi_{\mathbb{R}}:H^2(\tilde{S};\mathbb{R})\rightarrow I_{1,10}\otimes_{\mathbb{Z}}\mathbb{R}$$ where the subspace $H^2(S;\mathbb{R})\subseteq H^2(S;\mathbb{R})\oplus \mathbb{R}e=H^2(\tilde{S};\mathbb{R})$ is sent to the orthogonal complement of $k$. Define the \emph{positive cone} 
\[\mathcal{P}_{\tilde{S}}:=\{x^2>0\,|\,x\in H^2(\tilde{S};\mathbb{R})\}.\]
It has two connected components and one of them contains the class $l_0$ under the identification $\Psi_{\mathbb{R}}$ which we will denote by $\mathcal{P}_{\tilde{S}}'$ and call it the \emph{forward cone}. Similarly, for $H^2(S;\mathbb{R})$ we use $\mathcal{P}_S$ to denote its positive cone and $\mathcal{P}_S'$ its component that is contained in $\mathcal{P}_{\tilde{S}}'$.

The manifold $S$ has a universal covering space $\pi:T\rightarrow S$. When the manifold $T$ is equipped with a complex structure, it's called a \emph{K3 surface}. The basic topological invariants for $T$ are the following.
\begin{center}
\begin{tabular}{ |c|c|c|c|c|c| } 
 \hline
 $\pi_1(T)$ & $H^2(T;\mathbb{Z})$ & $Q_T$ & $b_2^+(T)$ & $e(T)$ & $\sigma(T)$ \\ 
  \hline
 $0$ & $\mathbb{Z}^{22}$ & $2(-E_8)\oplus 3U$ & $3$ & $24$ & $-16$ \\ 
  \hline
\end{tabular}
\end{center}

We denote by $\iota$ the covering involution on $T$ so that $S=T/\iota$. Then the action $\iota^*$ on $H^2(T;\mathbb{Z})$ is given by
\[\iota^*(x\oplus y\oplus z_1\oplus z_2\oplus z_3)=y\oplus x\oplus -z_1\oplus z_3\oplus z_2,\]
where $x,y$ belong to the two $-E_8$ components and $z_1,z_2,z_3$ belong to the three $U$ components. Let $Q_T^+$ ($Q_T^-$) be the $\iota^*$-invariant (anti-invariant) sublattice of $Q_T$. Then we see that 
\[Q_T^+=\{x\oplus x\oplus 0\oplus z\oplus z\,|\,x\in -E_8,z\in U\},\]
\[Q_T^-=\{x\oplus -x\oplus z_1\oplus z_2\oplus -z_2\,|\,x\in -E_8,z_1,z_2\in U\},\]
which are of rank $10$ and $12$ respectively. The pullback map $\pi^*:Q_S\rightarrow Q_T$ will send $x\oplus z\in -E_8\oplus U$ to $x\oplus x\oplus 0\oplus z\oplus z\in Q_T^+\subseteq Q_T$, which defines a lattice isomorphism between $Q_S$ and $Q_T^+$ but does not preserve the bilinear form. Indeed, $(\pi^*(\alpha),\pi^*(\beta))=2(\alpha,\beta)$.

\subsection{Diffeomorphism group action and normalized fundamental chamber}
By diffeomorphisms we always mean the orientation-preserving ones. Since we are interested in the K\"{a}hler cone and symplectic cone which are the domains invariant under the action by diffeomorphisms, it's important to figure out how the diffeomorphism group $\text{Diff}^+(S)$ or $\text{Diff}^+(\tilde{S})$ acts on $H^2(S;\mathbb{R})$ or $H^2(\tilde{S};\mathbb{R})$. \cite{Diffgroupofellipticsurface} investigates the image of the natural map
\[\psi_X:\text{Diff}^+(X)\rightarrow O(Q_X)\]
from the diffeomorphism group to the isometry group of the intersection form for all relatively minimal elliptic surfaces $X$ with positive Euler number. We will need the special case for Enriques surface summarized by the following theorem. Recall that (for example, see \cite[Proposition 2.4]{FM88}) an embedded sphere representing class $\alpha$ with self-intersection $(-2)$ or $(-1)$ will give a diffeomorphism whose cohomological action is the reflection $R_{\alpha}$ which maps any class $\beta$ to $\beta-2\frac{(\alpha,\beta)}{(\alpha,\alpha)}\alpha$. An isometry in $O(Q_S)$ is said to have real spinor norm one if it preserves the forward cone $\mathcal{P}'_S$. 

\begin{theorem}[\cite{Diffgroupofellipticsurface} Theorem 7, Lemma 7]\label{thm:diffaction}
    For the manifold $S$, we have the following:
    \begin{itemize}
        \item Every class in $Q_S$ of square $(-2)$ can be represented by a smoothly embedded $2$-sphere. $\text{Diff}^+(S)$ acts transitively on all $(-2)$-classes.
        \item The reflections along all $(-2)$-classes generate an index $2$ subgroup $O'(Q_S)$ of $O(Q_S)$ consisting of elements with real spinor norm one. Moreover, this subgroup can be generated by finitely many reflections along $r_0,r_1,\cdots,r_8,r_9$.
        \item There exists a diffeomorphism on $S$ inducing $-\text{id}$ on $Q_S$ and therefore, $\psi_S$ is surjective.
    \end{itemize}
\end{theorem}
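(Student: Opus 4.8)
The plan is to reproduce the argument of \cite{Diffgroupofellipticsurface} by splitting the statement into a purely lattice-theoretic part about $O(Q_S)$ and a geometric part realizing isometries by diffeomorphisms. On the lattice side I would first record three facts about the indefinite even unimodular lattice $Q_S\cong -E_8\oplus U$ of signature $(1,9)$. \emph{(a)} The assignment sending an isometry to whether it preserves or swaps the two components of the positive cone is a homomorphism $O(Q_S)\to\mathbb{Z}/2$; it is surjective since $-\mathrm{id}$ swaps the components, and its kernel is exactly $O'(Q_S)$ (the real-spinor-norm-one subgroup), so $[O(Q_S):O'(Q_S)]=2$. \emph{(b)} Because $Q_S$ contains the hyperbolic summand $U$ and is unimodular, Eichler's criterion shows that the subgroup generated by Eichler transvections, which lies in $O'(Q_S)$, already acts transitively on primitive vectors of any fixed square; every $(-2)$-class is primitive (a square $-2$ vector cannot be a nontrivial multiple in an even lattice), so all $(-2)$-classes form a single $O'(Q_S)$-orbit. \emph{(c)} For $\alpha^2=-2$ the reflection $R_\alpha(\beta)=\beta+(\alpha,\beta)\alpha$ fixes the signature-$(1,8)$ hyperplane $\alpha^\perp$ and hence preserves the forward cone $\mathcal{P}'_S$, so $R_\alpha\in O'(Q_S)$; by Vinberg's description of the reflective lattice $-E_8\oplus U$ these reflections generate all of $O'(Q_S)$, and the finitely many reflections $R_{r_0},\ldots,R_{r_9}$ already suffice. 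Granting \emph{(a)}--\emph{(c)}, the lattice content of the second bullet is immediate.

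Next I would carry out the geometric realization. A smoothly embedded $(-2)$-sphere $\Sigma$ has a tubular neighborhood diffeomorphic to the disk cotangent bundle of $S^2$, and the associated generalized Dehn twist is an orientation-preserving diffeomorphism acting on $Q_S$ by $R_{[\Sigma]}$; this is precisely \cite[Proposition 2.4]{FM88}. Thus it suffices to exhibit embedded $(-2)$-spheres representing each of $r_0,\ldots,r_9$, and here I would pass to a concrete elliptic model of $S$ (such as $E(1)_{2,2}$), in which components of reducible singular fibers together with matching cycles of nodal fibers are holomorphic or Lagrangian $(-2)$-spheres realizing the required classes. Their reflections generate $O'(Q_S)$ by \emph{(c)}, so $\mathrm{im}(\psi_S)\supseteq O'(Q_S)$, which finishes the second bullet. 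The first bullet then follows by bootstrapping off transitivity: $r_0$ is now represented by an embedded sphere $\Sigma_0$, and given an arbitrary $(-2)$-class $\beta$, fact \emph{(b)} yields $g\in O'(Q_S)$ with $g(r_0)=\beta$; writing $g=\psi_S(f)$ for a diffeomorphism $f$, the sphere $f(\Sigma_0)$ represents $\beta$. This simultaneously establishes that every $(-2)$-class is represented by an embedded sphere and that $\text{Diff}^+(S)$ acts transitively on the $(-2)$-classes.

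Finally, for the third bullet I would construct a single orientation-preserving diffeomorphism inducing $-\mathrm{id}$ on $Q_S$. Since $-\mathrm{id}$ swaps the two components of the positive cone it has real spinor norm $-1$ and lies outside $O'(Q_S)$; combined with $[O(Q_S):O'(Q_S)]=2$ from \emph{(a)}, we get $O(Q_S)=O'(Q_S)\sqcup(-\mathrm{id})O'(Q_S)$, so one such diffeomorphism upgrades $\mathrm{im}(\psi_S)=O'(Q_S)$ to all of $O(Q_S)$ and $\psi_S$ is surjective. I expect this last step to be the main obstacle. Unlike the reflections, $-\mathrm{id}$ reverses the forward cone and therefore cannot be assembled from $(-2)$-sphere Dehn twists, so it must be produced by a global construction --- for instance by descending a suitable involution from the K3 double cover $T$, or by exhibiting $S$ through a fiber-sum or real-structure model carrying an explicit orientation-preserving involution that acts as $-\mathrm{id}$ on cohomology. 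The delicate points are verifying orientation-preservation and that the induced map is exactly $-\mathrm{id}$ on the full rank-$10$ lattice rather than merely up to $O'(Q_S)$, and handling the multiple fibers and the fundamental group $\mathbb{Z}_2$ of the Enriques fibration, which is exactly where the argument diverges from the simply connected case $E(n)$.
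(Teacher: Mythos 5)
The paper does not actually prove this statement: it is imported wholesale from \cite{Diffgroupofellipticsurface} (Theorem 7 and Lemma 7), so there is no internal argument to compare yours against. Judged on its own terms, your outline follows the standard route of that reference --- a lattice-theoretic analysis of $O(Q_S)$ combined with geometric realization of reflections by generalized Dehn twists along embedded $(-2)$-spheres --- and most of the individual steps are sound: the index-two computation in (a), the primitivity of $(-2)$-vectors in an even lattice, the observation that a reflection in a vector of negative square fixes a signature-$(1,8)$ hyperplane and hence preserves the forward cone, and the bootstrapping of the first bullet from a single represented class together with transitivity and surjectivity onto $O'(Q_S)$.

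Two points need repair. First, your appeal to Eichler's criterion in (b) is misapplied: the standard hypothesis is that the lattice split off two orthogonal hyperbolic planes, whereas $Q_S\cong -E_8\oplus U$ contains only one copy of $U$. Transitivity on $(-2)$-classes should instead be extracted from the Coxeter--Vinberg picture you already invoke in (c): $Q_S$ is the $E_{10}$ lattice, whose reflection group has a simplicial fundamental chamber with walls dual to $r_0,\dots,r_9$, and since the diagram is connected and simply laced all real roots --- i.e.\ all $(-2)$-vectors --- form a single Weyl-group orbit. So (b) is a consequence of (c), and Eichler is not needed. Second, and more seriously, the third bullet is not proved. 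You correctly isolate why $-\mathrm{id}$ cannot be assembled from $(-2)$-reflections (it swaps the components of the positive cone) and you list plausible sources for an involution realizing it, but none of these is carried out; checking that a candidate is orientation-preserving and acts as $-\mathrm{id}$ on the entire rank-$10$ lattice, rather than merely in the correct coset of $O'(Q_S)$, is precisely the nontrivial geometric content of Lemma 7 of the cited reference. As it stands your write-up is a reasonable gloss on the citation, with a concrete gap at the realization of $-\mathrm{id}$ and a fixable misattribution in the transitivity step.
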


It's known that for K\"{a}hler surfaces (\cite{Brussee},\cite{AlgSurandSW}) or more generally symplectic $4$-manifolds (\cite{Lismooth}) of non-negative Kodaira dimension, up to sign the classes represented by smoothly embedded $(-1)$-spheres are exactly the same as the ones represented by holomorphic or symplectic embedded $(-1)$-spheres. Consequently $\pm e$ are the only classes that can be represented smoothly embedded $(-1)$-sphere in $\tilde{S}$. As a result we have the immediate corollary:
\begin{corollary}\label{cor:diffonblowup}
    The image of $\psi_{\tilde{S}}$ splits as $O(Q_S)\times \mathbb{Z}_2$, where $\mathbb{Z}_2$ denotes the reflection along the $(-1)$-class $e$.
\end{corollary}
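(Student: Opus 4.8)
The plan is to establish two inclusions: that $\operatorname{im}\psi_{\tilde S}$ is contained in $O(Q_S)\times\mathbb{Z}_2$, and that conversely every element of this subgroup is realized by a diffeomorphism. Throughout I use the orthogonal splitting $Q_{\tilde S}=Q_S\oplus\langle e\rangle$ with $e^2=-1$, so that $e^\perp=Q_S$ and $\langle e\rangle$ is a direct summand.

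For the upper bound I would start from the input quoted just before the statement: in $\tilde S$ the only classes representable by a smoothly embedded $(-1)$-sphere are $\pm e$. Since any orientation-preserving diffeomorphism $f$ carries embedded $(-1)$-spheres to embedded $(-1)$-spheres, the induced isometry must preserve the set $\{\pm e\}$, hence $\psi_{\tilde S}(f)(e)=\pm e$. Because $e$ is primitive with $e^2=-1$, any isometry $g$ with $g(e)=\pm e$ preserves the line spanned by $e$ and therefore preserves $e^\perp=Q_S$; it thus splits as $(g|_{Q_S})\oplus(\pm\operatorname{id}_{\langle e\rangle})$ with $g|_{Q_S}\in O(Q_S)$. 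This identifies the stabilizer of $\{\pm e\}$ in $O(Q_{\tilde S})$ with exactly $O(Q_S)\times\langle R_e\rangle$, where $R_e$ is the reflection sending $e\mapsto -e$ and fixing $Q_S$ pointwise, and gives $\operatorname{im}\psi_{\tilde S}\subseteq O(Q_S)\times\mathbb{Z}_2$.

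For the reverse inclusion I would realize each factor separately. The reflection $R_e$ comes from an actual diffeomorphism: the exceptional sphere is an embedded $(-1)$-sphere in class $e$, so by the reflection construction recalled before Theorem \ref{thm:diffaction} (cf.\ \cite{FM88}) it induces a diffeomorphism acting as $R_e$, which fixes $Q_S=e^\perp$ and negates $e$. For the $O(Q_S)$ factor I would invoke surjectivity of $\psi_S$ from Theorem \ref{thm:diffaction}: given $h\in O(Q_S)$, choose $f\in\operatorname{Diff}^+(S)$ with $\psi_S(f)=h$, isotope it (without changing its cohomological action) to be the identity on a small ball about the blowup point, and form the connected sum with the identity on $\overline{\mathbb{CP}^2}$ in $\tilde S=S\#\overline{\mathbb{CP}^2}$. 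The resulting diffeomorphism acts as $h\oplus\operatorname{id}$ on $Q_S\oplus\langle e\rangle$, realizing $(h,+1)$; here it is essential that the identity on the $\overline{\mathbb{CP}^2}$ summand fixes its generator $e$ rather than negating it.

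Finally, since $\operatorname{im}\psi_{\tilde S}$ is a subgroup containing both $O(Q_S)\times\{1\}$ and $\{1\}\times\langle R_e\rangle$, and these two commuting subgroups intersect trivially and together generate $O(Q_S)\times\mathbb{Z}_2$, the image equals the internal direct product $O(Q_S)\times\mathbb{Z}_2$. I expect the only genuinely substantial step to be the upper bound, and within it the borrowed ingredient — the identification of $\{\pm e\}$ as the complete set of smooth $(-1)$-sphere classes, which rests on Seiberg--Witten-type results for non-negative Kodaira dimension. Everything else is formal lattice theory together with the standard connected-sum-of-diffeomorphisms construction, the only delicate bookkeeping being the sign check on the $e$-component just noted.
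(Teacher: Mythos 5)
Your proposal is correct and follows essentially the same route the paper intends: the paper derives the corollary immediately from the fact that $\pm e$ are the only smoothly embedded $(-1)$-sphere classes (forcing any induced isometry to preserve $\{\pm e\}$ and hence $e^\perp=Q_S$), together with the surjectivity of $\psi_S$ and the reflection along the exceptional sphere for realizability. Your write-up merely makes explicit the lattice splitting and the connected-sum extension of diffeomorphisms that the paper leaves implicit.
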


Observe that the positive cone $\mathcal{P}_S$ is invariant under the entire diffeomorphism group $\text{Diff}^+(S)$ and the component $\mathcal{P}'_S$ is invariant under the ones inducing isometries of $Q_S$ with real spinor norm one. Also there is the obvious $\mathbb{R}^+$-action on $\mathcal{P}'_S$ by rescaling. There is an explicit domain $\Delta$, which we call it the \emph{normalized fundamental chamber}, parametrizing the quotient of these actions. Let 
 \[\Delta:=\{(b_1,b_2,\cdots,b_{10})\in\mathbb{R}^{10}\,|\,\sum_{i=1}^{10}b_i=3,b_1+b_2+b_3\leq 1,b_1\geq b_2\geq\cdots\geq b_{10}> 0\}\]
 be the region in $\mathbb{R}^{10}$ and $\overline{\Delta}$ be its closure. Note that $\overline{\Delta}$ is a convex polytope of dimension $9$ sitting inside $\mathbb{R}^{10}$. It has $9$ vertices
 \[V_1=\frac{1}{7}(3,2,2,2,2,2,2,2,2,2)\]
 \[V_2=\frac{1}{14}(5,5,4,4,4,4,4,4,4,4)\]
 \[V_3=\frac{1}{21}(7,7,7,6,6,6,6,6,6,6)\]
 \[V_4=\frac{1}{18}(6,6,6,6,5,5,5,5,5,5)\]
 \[V_5=\frac{1}{15}(5,5,5,5,5,4,4,4,4,4)\]
 \[V_6=\frac{1}{12}(4,4,4,4,4,4,3,3,3,3)\]
 \[V_7=\frac{1}{9}(3,3,3,3,3,3,3,2,2,2)\]
 \[V_8=\frac{1}{6}(2,2,2,2,2,2,2,2,1,1)\]
 \[V_9=\frac{1}{3}(1,1,1,1,1,1,1,1,1,0)\]
and $\overline{\Delta}\setminus \Delta$ is exactly the single point $V_9$. Any point $(b_1,\cdots,b_{10})\in \overline{\Delta}$ corresponds to the class $\mathbf{b}:=l_0-\sum_{i=1}^{10}b_il_i\in H^2(S;\mathbb{R})$. Since $\overline{\Delta}$ is the convex hull of its vertices, by checking the values at all vertices one can see that $\mathbf{b}\in\mathcal{P}'_S$ for all points in $\Delta$. The next lemma follows from some general results in Coxeter theory (\cite{Coxeter}). We include a proof in our specific case for completeness, which is a minor modification of the MathSciNet reivew of \cite{KK17} by Weiyi Zhang. 
\begin{lemma}\label{lem:fundamentaldomain}
    The action of $\rm{Diff}$$^+(S)\times \mathbb{R}^+$ on $\mathcal{P}_S$ has a fundamental domain naturally parametrized by $\Delta$.
\end{lemma}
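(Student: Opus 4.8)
The plan is to reduce the lemma to the classical fact that a reflection group admits its Weyl chamber as a fundamental domain, and then to match that chamber with $\Delta$ in coordinates. First I would strip off the parts of the group that act in an elementary way. By Theorem~\ref{thm:diffaction} the image $\psi_S(\mathrm{Diff}^+(S))$ is all of $O(Q_S)$, and since $-\mathrm{id}$ does not preserve the forward cone we may write $O(Q_S)=O'(Q_S)\times\{\pm\mathrm{id}\}$, where $O'(Q_S)$ preserves $\mathcal{P}'_S$ while $-\mathrm{id}$ interchanges the two components of $\mathcal{P}_S$. As the $\mathbb{R}^+$-rescaling commutes with the linear action, a fundamental domain for $\mathrm{Diff}^+(S)\times\mathbb{R}^+$ on $\mathcal{P}_S$ is the same thing as one for $W:=O'(Q_S)$ together with $\mathbb{R}^+$ on the single component $\mathcal{P}'_S$. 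So the whole problem becomes: produce a scaling-normalized fundamental domain for the reflection group $W$ on $\mathcal{P}'_S$.

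The heart of the matter is to use the structure of $W$ as a reflection group. By Theorem~\ref{thm:diffaction}, $W$ is generated by the reflections $R_{r_0},\dots,R_{r_9}$ in the $(-2)$-classes $r_0,\dots,r_9$, exhibiting $W$ as a discrete hyperbolic reflection group on the signature $(1,9)$ space $H^2(S;\mathbb{R})$. The candidate is the closed chamber $\overline{C}:=\{x\in\overline{\mathcal{P}'_S}\mid x\cdot r_i\ge 0,\ i=0,\dots,9\}$, which is a cone and hence compatible with rescaling. I would prove $\overline{C}$ is a fundamental domain in two standard steps, following the minor modification of Weiyi Zhang's review of \cite{KK17}. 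For \emph{existence} (every orbit meets $\overline{C}$): fix $\rho$ in the interior of $\overline{C}$, so that $\rho$ is forward timelike and $\rho\cdot r_i>0$ for all $i$; for $x\in\mathcal{P}'_S$ the orbit $Wx$ is discrete (since $W$ lies in the discrete group of lattice isometries) and the sublevel sets of $y\mapsto y\cdot\rho$ meet the closed forward cone in compact sets, so the minimum of $y\mapsto y\cdot\rho$ over $Wx$ is attained at some $y_0$. Using $R_{r_i}y_0=y_0+(y_0\cdot r_i)r_i$ (as $r_i^2=-2$) and minimality gives $(y_0\cdot r_i)(r_i\cdot\rho)\ge 0$, hence $y_0\cdot r_i\ge 0$ for every $i$ and $y_0\in\overline{C}$. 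For \emph{uniqueness}: $W$ permutes the chambers cut out by all reflection hyperplanes simply transitively, so two orbit-equivalent points of the open chamber coincide; this is the general Coxeter-theoretic statement in \cite{Coxeter}.

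Finally I would pass to coordinates. Using $\mathbb{R}^+$ to normalize the $l_0$-coefficient to $1$ (legitimate because $x\cdot l_0>0$ on $\mathcal{P}'_S$), the condition $x\in H^2(S;\mathbb{R})=k^{\perp}$ forces $\sum_i b_i=3$, while a direct computation yields $x\cdot r_i=b_i-b_{i+1}$ for $1\le i\le 9$ and $x\cdot r_0=1-(b_1+b_2+b_3)$. Thus the chamber inequalities become precisely $b_1\ge b_2\ge\cdots\ge b_{10}$ and $b_1+b_2+b_3\le 1$. These already force $b_{10}\ge 0$: from $b_3\le\tfrac13$ one gets $b_4+\cdots+b_9\le 2$, so $b_{10}=3-\sum_{i=1}^9 b_i\ge 0$, with equality exactly at $V_9$. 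Hence the normalized slice of $\overline{C}$ is the polytope $\overline{\Delta}$, and intersecting with the open positive cone $\{x^2>0\}=\{\sum_i b_i^2<1\}$ deletes only the single light-cone vertex $V_9$ (by convexity of $\overline{\Delta}$ together with the vertex values already recorded), leaving $\Delta=\overline{\Delta}\setminus\{V_9\}$. This identifies $\Delta$ with a fundamental domain.

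I expect the principal obstacle to be the fundamental-domain step for the infinite hyperbolic reflection group $W$: making the existence argument fully rigorous (discreteness of $Wx$ and attainment of the minimum of the height functional on the forward cone) and correctly invoking simple transitivity on chambers for uniqueness. By contrast, the group reduction and the coordinate identification, including the equivalence of $b_{10}>0$ with $\sum_i b_i^2<1$ on the chamber, are routine convexity comparisons.
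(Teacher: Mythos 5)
Your proof is correct and lands on the same chamber as the paper, but the decisive step --- showing that every $O'(Q_S)$-orbit in $\mathcal{P}'_S$ meets the chamber --- is argued by a genuinely different route. The paper first treats integral classes by an explicit descent (reflect in $l_0-l_i-l_j-l_k$ for the three largest coefficients, each step dropping the degree $a$ by at least $1$, so the process terminates), and then passes to arbitrary real classes via density of rational points together with the convexity of the Tits cone from \cite{Coxeter}. You instead run the standard hyperbolic reflection-group argument: minimize the height functional $y\mapsto y\cdot\rho$ over the orbit for $\rho$ interior to the chamber, using compactness of the sublevel sets in the closed forward cone, and then read off $y_0\cdot r_i\ge 0$ from $R_{r_i}y_0=y_0+(y_0\cdot r_i)r_i$ and minimality. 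This treats all real classes at once and avoids the integrality/convexity detour, at the cost of needing that the orbit meets compact subsets of $\mathcal{P}'_S$ in finite sets; you correctly flag this as the point requiring care, and I would phrase it via proper discontinuity of the discrete group $O(Q_S)\subseteq O(1,9)$ on the forward timelike vectors (discreteness of the orbit alone does not rule out accumulation toward the light cone --- restrict to the compact set $\{y^2=x^2,\ y\cdot\rho\le c\}\subseteq\mathcal{P}'_S$). The group-theoretic reduction, the Coxeter-theoretic uniqueness statement, and the coordinate identification of the chamber with $\Delta$ all match the paper. One small caveat shared with the paper: as your own observation that $b_{10}\ge 0$ is implied by the other inequalities shows, $\overline{\Delta}$ is a $9$-simplex with \emph{ten} vertices, the omitted one being $\tfrac{3}{10}(1,\dots,1)$; since $\sum b_i^2=\tfrac{9}{10}<1$ there, your conclusion that intersecting with $\{x^2>0\}$ deletes only $V_9$ stands, but any ``check the values at the vertices'' step should include this extra vertex.
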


\begin{proof}
    By Theorem \ref{thm:diffaction}, the quotient of $\mathcal{P}_S$ by the entire diffeomorphism group is the same as the quotient of $\mathcal{P}'_S$ by the reflections along $(-2)$-classes $r_0,r_1,\cdots,r_8,r_9$. Note that the group $O'(Q_S)$ with the generating set $\{r_0,r_1,\cdots,r_8,r_9\}$ forms a Coxeter system. By \cite[Section 5.13]{Coxeter}, the following domain 
     \[\tilde{\Delta}:=\{(a,b_1,b_2,\cdots,b_{10})\in\mathbb{R}^{11}\,|\,\sum_{i=1}^{10}b_i=3a,b_1+b_2+b_3\leq a,b_1\geq b_2\geq\cdots\geq b_{10}\}\]
     is the fundamental domain of the Tits cone $U:=\bigcup_{g\in O'(Q_S)}g\tilde{\Delta}$ acted by $O'(Q_S)$.
     To prove our statement, it suffices to show every element in $\mathcal{P}'_S$ can be transformed into $\tilde{\Delta}$ by reflections. Now a key input from \cite[Section 5.13]{Coxeter} is that the Tits cone is convex, which allows us to only consider integral classes by rescaling and convex combination. The elements $al_0-\sum_{i=1}^{10}b_il_i\in\mathcal{P}'_S$ are characterized by 
    \[a>0,\,3a=\sum_{i=1}^{10}b_i,\,a^2>\sum_{i=1}^{10}b_{i}^2.\]
    Note that the reflections along $l_i-l_j$ or $l_0-l_i-l_j-l_k$ have the effect switching the coefficients $$(b_i,b_j)\mapsto(b_j,b_i)$$ or $$(a,b_i,b_j,b_k)\mapsto(2a-b_i-b_j-b_k,a-b_j-b_k,a-b_i-b_k,a-b_i-b_j).$$
    For elements in $\mathcal{P}_S'$ with integer coefficients, one can repeat the procedure of choosing three biggest $b_i,b_j,b_k$ and whenever $a>b_i+b_j+b_k$ applying reflection along $l_0-l_i-l_j-l_k$. Since each reflection decreases $a$ by at least $1$ and the reflection preserves the connected component $\mathcal{P}_S'$, after finitely many steps and applying reflections along $l_i-l_j$, we will get a non-increasing sequence $(b_1,\cdots,b_{10})$ with $3a=\sum_{i=1}^{10}b_i$ and $a\geq b_1+b_2+b_3$. It's easy to see from Cauchy–Schwarz inequality that these conditions force $b_{10}> 0$ since $a^2>\sum_{i=1}^{10}b_i^2$. Therefore after normalizing the coefficient $a$ to $1$ by the $\mathbb{R}^+$-action, we get an element in $\Delta$.
\end{proof}

\begin{rmk}
    The polytope $\Delta$ actually coincides with the `$\cdot c_1=0$'-part of the normalized symplectic reduced cone of $\mathbb{CP}^2\#10\overline{\mathbb{CP}^2}$ (for example, see \cite{llwtorelli}). Since cohomologous symplectic forms on Kodaira dimension $-\infty$ manifolds are diffeomorphic (\cite{spaceofsymp}), this implies there is a bijection between $\Delta$ and the space of symplectic forms $\omega$ on $\mathbb{CP}^2\#10\overline{\mathbb{CP}^2}$ satisfying $\omega\cdot c_1(\omega)=0$  modulo the action by $\text{Diff}^+(\mathbb{CP}^2\#10\overline{\mathbb{CP}^2})\times \mathbb{R}^+$. However, for Enriques surface, $\Delta$ only parametrizes the cohomology classes of symplectic forms modulo the action since it's not known whether two cohomologous symplectic forms on Enriques surface are diffeomorphic or not.
\end{rmk}

\section{Enriques surface as complex manifold}

In this section we will treat $S$ as a complex surface by equipping it with some complex structure $J$. A basic observation is that by the smooth invariance of Kodaira dimension (\cite{FriedmanQin}) we see that $S$ becomes a complex surface of Kodaira dimension $0$. Since it's also easy to see $S$ is not diffeomorphic to any other complex surface with Kodaira dimension $0$ in the list of Enriques-Kodaira classification, any complex structure will satisfy the conditions in Definition \ref{def:enriques}. Similar argument shows that any complex structure on the smooth manifold $\Tilde{S}:=S\#\overline{\mathbb{CP}^2}$ comes from the complex blowup of some Enriques surface $(S,J)$. Since $p_g(S)=0$ implies the complex surface must be algebraic and $q(S)=0$ implies the Picard variety $\text{Pic}_0(S)$ is trivial, we can naturally identify the Picard group $\text{Pic}(S)$ with $H^2(S;\mathbb{Z})$ by taking the first Chern class. The group of numerically equivalent divisor classes $\text{Num}(S)$ is thus naturally identified with $H^2(S;\mathbb{Z})/\text{Tor}=Q_S$.
\subsection{Some standard facts and notations}
We briefly review some standard facts about Enriques surface from \cite{BHPV,enriquesbook} and fix some notations.

Let $K$ be the canonical class which is the torsion element in $\text{Pic}(S)=H^2(S;\mathbb{Z})$. For any class $D\in H^2(S;\mathbb{Z})$ we denote by $|D|$ its linear system when viewing $D$ as a divisor and by $h^i(D)$ the dimension of the cohomology groups $H^i(S,\mathcal{O}_S(D))$ where $\mathcal{O}_S(D)$ is the associated line bundle on $S$. $D$ is called \emph{effective} (or sometimes $J$-effective if we want to address the complex structure $J$) if $h^0(D)=\text{dim}|D|+1\geq 1$ and \emph{nef} (or $J$-nef) if it pairs non-negatively with any effective class. We say $D$ gives an \emph{elliptic pencil} if $\text{dim}|D|=1$ with no fixed component. This implies the existence of an $S^2$-family of genus zero $J$-holomorphic curves in class $D$ which covers $S$ and all but finitely many of them are smooth and connected tori. The basic tools from complex geometry are the Riemman-Roch formula and Serre duality
\[h^0(D)-h^1(D)+h^2(D)=\frac{D(D-K)}{2}+\chi_h(\mathcal{O}_S)=\frac{D^2}{2}+1,\]
\[h^i(D)=h^{2-i}(K-D)\text{ for }i=0,1,2,\]
from which we obtain the inequality
    \begin{equation}\label{equ:RR}
        h^0(D)+h^0(K-D)\geq \frac{D^2}{2}+1.
    \end{equation}
    The following lemma can be found in \cite[Section \Romannum{8}.17]{BHPV} or \cite[Section 2.2]{enriquesbook}. We give a sketch of the proof.
    \begin{lemma}\label{lem:2D}
        Let $D$ be a primitive class in $H^2(S;\mathbb{Z})$ with $D^2=0$ and $J$ is an arbitrary complex structure on $S$. Then either $D$ or $-D$ is $J$-effective. If $D$ is both $J$-effective and $J$-nef, then either $D$ or $2D$ gives an elliptic pencil.\footnote{Actually \cite[Proposition 2.2.8]{enriquesbook} shows that the primitive assumption can guarantee $|D|$ has dimension $0$ and $|2D|$ has dimension $1$. But we don't need this result.}
    \end{lemma}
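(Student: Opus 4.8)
The plan is to prove the two assertions separately, using only Riemann--Roch, Serre duality, adjunction, and the Hodge index theorem, together with the standing facts that $S$ is algebraic with $p_g=q=0$ (so every class in $H^2(S;\mathbb{R})$ is of type $(1,1)$ and $H^1(\mathcal{O}_S)=0$) and that $K$ is a nonzero $2$-torsion class with $h^0(K)=0$. For the first assertion I would fix an ample class $H$. Since $D^2=0$ and $D\neq 0$ (being primitive), the Hodge index theorem forces $D\cdot H\neq 0$, so after replacing $D$ by $-D$ if necessary I may assume $D\cdot H>0$. Applying the inequality \eqref{equ:RR} to $D$ gives $h^0(D)+h^0(K-D)\geq 1$, so $D$ or $K-D$ is effective; but if $K-D$ were effective then $(K-D)\cdot H\geq 0$, and since $K$ is torsion this says $-D\cdot H\geq 0$, contradicting $D\cdot H>0$. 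Hence $D$ itself is effective, which is exactly what is wanted.

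For the second assertion, assume now $D$ is effective and nef. Nefness and $D\neq 0$ again give $D\cdot H>0$ by Hodge index, so $(K-D)\cdot H<0$ and thus $h^2(D)=h^0(K-D)=0$ by Serre duality; Riemann--Roch then yields $\dim|D|=h^1(D)$. The key geometric input I would extract is the normal bundle of $D$: since $D\cdot K=0$ one has $p_a(D)=1$, so (viewing $D$ as a connected Gorenstein Cartier divisor of arithmetic genus one) $\omega_D\cong\mathcal{O}_D$, and adjunction $\omega_D\cong\mathcal{O}_D(K+D)$ gives $\mathcal{O}_D(D)\cong\mathcal{O}_D(K)$, a $2$-torsion line bundle on $D$. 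Restricting the sequence $0\to\mathcal{O}_S\to\mathcal{O}_S(D)\to\mathcal{O}_D(D)\to 0$ and using $H^1(\mathcal{O}_S)=0$ identifies $\dim|D|=h^0(\mathcal{O}_D(D))\in\{0,1\}$, with value $1$ exactly when $\mathcal{O}_D(D)$ is trivial.

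This produces the dichotomy. If $\mathcal{O}_D(D)\cong\mathcal{O}_D$, then $\dim|D|=1$. Otherwise $\mathcal{O}_D(D)$ is a nontrivial $2$-torsion bundle, $\dim|D|=0$, and I pass to $2D$: since $\mathcal{O}_D(2D)\cong\mathcal{O}_D(2K)\cong\mathcal{O}_D$ is trivial, the sequence $0\to\mathcal{O}_S(D)\to\mathcal{O}_S(2D)\to\mathcal{O}_D(2D)\to 0$ together with $h^1(D)=\dim|D|=0$ gives $\dim|2D|=1$. In either case I set $P\in\{D,2D\}$ to be the class carrying a pencil; it is nef with $P^2=0$.

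It remains to show $|P|$ has no fixed component, which I expect to be the main obstacle. Writing $P=Z+W$ with $Z$ the fixed and $W$ the moving part, nefness of $P$ and $P^2=0$ force $P\cdot Z=P\cdot W=0$; Hodge index then rules out $W^2>0$, since a big class $W$ orthogonal to the nonzero nef class $P$ with $P^2=0$ is impossible, so $W^2=0$ and $|W|$ is base-point free. Then $W$ and $Z$ are orthogonal isotropic classes in a lattice of signature $(1,\rho-1)$, hence proportional, say $Z\equiv\lambda W$. In the case $P=D$, primitivity of $D=Z+W$ forces $\lambda=0$; in the case $P=2D$ the only competing possibility $Z\equiv W\equiv D$ is excluded because no class numerically equal to $D$ carries a pencil (both $|D|$ and $|D+K|$ have dimension $0$ in this case, by the same normal-bundle computation applied to $D+K$). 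Thus $Z\equiv 0$, and since $K$ is the only torsion class and is not effective, $Z=0$. Hence $|P|$ is a base-point-free pencil with $P^2=0$, i.e.\ an elliptic pencil. The delicate points I would still need to nail down are the connectedness and genus-one Gorenstein structure of $D$ guaranteeing $\omega_D\cong\mathcal{O}_D$ and $h^0(\mathcal{O}_D)=1$, and the proportionality--primitivity bookkeeping in this last step.
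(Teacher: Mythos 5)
Your overall architecture --- the inequality \eqref{equ:RR} for the first assertion, and for the second the restriction sequence $0\to\mathcal{O}_S(D)\to\mathcal{O}_S(2D)\to\mathcal{O}_D(2D)\to 0$ together with $\mathcal{O}_D(2D)\cong\omega_D^{\otimes 2}\cong\mathcal{O}_D$ coming from $2K=0$ --- is the same as the paper's; your proof of the first assertion differs only cosmetically (you normalize the sign of $D$ with an ample class first, the paper runs \eqref{equ:RR} on $D$ and then on $-D$). The substantive difference is that the paper imports from \cite{BHPV} and \cite{enriquesbook} the structure theorem that a primitive effective nef isotropic class is represented by a \emph{reduced, connected} configuration which is indecomposable of canonical type, with trivial dualizing sheaf. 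Everything you flag at the end as ``delicate points still to nail down'' ($D$ connected, $h^0(\mathcal{O}_D)=1$, $\omega_D\cong\mathcal{O}_D$) is exactly the content of that theorem, so as written your argument outsources its hardest input without supplying it. Connectedness, at least, does follow from your own lattice technique: a splitting $D=D_1+D_2$ into disjoint effective pieces makes $D_1,D_2$ orthogonal isotropic classes proportional to the primitive class $D$, so one of them is numerically trivial, hence zero since $K$ is not effective. Reducedness and $\omega_D\cong\mathcal{O}_D$ for the reducible configurations genuinely require the Kodaira-type classification.

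There is also one unflagged gap. In the case $P=2D$ with $Z\equiv W\equiv D$ you dismiss $W=D+K$ because ``the same normal-bundle computation applied to $D+K$'' gives $\dim|D+K|=0$. It does not: for $D'\in|D+K|$ adjunction gives $\omega_{D'}\cong\mathcal{O}_{D'}(D)$, hence $\mathcal{O}_{D'}(D')\cong\mathcal{O}_{D'}(K)$, so the computation yields $\dim|D+K|=h^0(\mathcal{O}_{D'}(K))$, and nothing you have established forces $K$ to restrict nontrivially to $D'$ --- that nontriviality is precisely the classical statement that both half-fibers are multiple fibers, which is usually extracted from the canonical bundle formula. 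A fix that stays inside your framework: the fixed part $Z$ is a subdivisor of the particular member $2D_0\in|2D|$, where $D_0$ is the unique reduced member of $|D|$; writing $Z=\sum a_iC_i$ with $0\le a_i\le 2$ in the components of $D_0$, the relation $\sum(a_i-1)C_i\equiv 0$ forces all $a_i$ equal (the numerical relations among the $C_i$ are generated by $\sum C_i$, the radical of the negative semidefinite intersection matrix), so $Z=D_0$ as a divisor and $W=2D-D_0=D$ in $\text{Pic}(S)$, contradicting $\dim|W|=\dim|2D|=1>0=\dim|D|$. Alternatively, argue as the paper implicitly does: since $h^1(\mathcal{O}_S(D))=0$, the map $H^0(\mathcal{O}_S(2D))\to H^0(\mathcal{O}_{D_0}(2D))\cong H^0(\mathcal{O}_{D_0})$ is onto, producing a member of $|2D|$ disjoint from $D_0$; the fixed part, being supported on $D_0$, is therefore empty, which disposes of the fixed-component step in one stroke.
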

    \begin{proof}
        By (\ref{equ:RR}), if $D$ is not effective then $K-D$ must be effective. Since $(S,J)$ is always algebraic, we have $-(K-D)=K+D$ can not be effective. Now apply (\ref{equ:RR}) again to $-D$, we obtain that $-D$ is effective. Now suppose $D$ is both effective and nef. Since we also imposed the condition that $D$ is a primitive class, $D$ will be represented by a reduced by possibly reducible $J$-holomorphic curve which is an \emph{elliptic configuration} (in the language of \cite{BHPV}) or \emph{indecomposable of canonical type} (in the language of \cite{enriquesbook}). This means the configuration is from the list of Kodaira's table in his classification of singular elliptic fibres. If $D$ doesn't give an elliptic pencil, then $h^0(D)=1$ and by Riemann-Roch formula, we have $h^1(D)=0$. The short exact sequence of sheaves
        \[0\rightarrow \mathcal{O}_S(D)\rightarrow \mathcal{O}_S(2D)\rightarrow \mathcal{O}_D(2D)\rightarrow 0\] and its induced long exact sequence of cohomology shows that \[h^0(2D)=1+\text{dim} H^0(D,\mathcal{O}_D(2D)).\] Note that when $D$ is represented by a smooth elliptic curve, it's easy to see from adjunction formula that 
        \[\mathcal{O}_D(2D)\cong \mathcal{O}_D(2K+2D)\cong \mathcal{O}_D(D+K)^{\otimes 2}\cong \omega_D^{\otimes 2}\cong \mathcal{O}_D^{\otimes 2}\cong \mathcal{O}_D.\]
        Here $\omega_D$ denotes the dualizing sheaf on the curve $D$ which is trivial for a smooth elliptic curve. \cite{BHPV} or \cite{enriquesbook} also shows that this is still the case when $D$ is only assumed to be an elliptic configuration of indecomposable of canonical type. And as a result, $\text{dim} H^0(D,\mathcal{O}_D(2D))=1$ always holds true under our assumptions. So the linear system $|2D|$ has dimension $1$ and it doesn't contain fixed component since all the components must have self-intersection $(-2)$. 
    \end{proof}

\subsection{Period domain and Torelli theorem}
Now we review some knowledge about period domain of Enriques surface from \cite{Horikawa} and \cite{Namikawa}. Firstly note that when $S$ is given a complex structure $J$, the universal covering $\pi:T\rightarrow S$ and the involution $\iota:T\rightarrow T$ will become holomorphic once we equip $T$ with the complex structure $\pi^*J$. The complex surface $(T,\pi^*J)$ is then a K3 surface in the sense that it has trivial canonical line bundle and the irregularity $q=0$. The trivialization of the canonical line bundle offers a holomorphic $2$-form $\Omega$ on $T$ which is unique up to a constant rescaling. Note that $\iota^*\Omega$ is still a holomorphic $2$-form which must be either $\Omega$ or $-\Omega$ since $\iota^*$ has only $\pm 1$ as its eigenvalues. But the former case can never happen since that will imply the existence of a non-trivial holomorphic $2$-form on the quotient surface $(S,J)$ contradicting the condition $p_g(S)=0$. As a result, if we view $Q^-_{T,\mathbb{C}}:=Q_T^-\otimes_{\mathbb{Z}}\mathbb{C}$ as a complex linear subspace of $H^2(T;\mathbb{C})$, then the class $[\Omega]$ must belong to $Q^-_{T,\mathbb{C}}$. Moreover, since $\Omega$ is a $(2,0)$-form, it would satisfy $[\Omega]\cdot[\Omega]=0$ and $[\Omega]\cdot[\bar{\Omega}]>0$. Therefore, any complex structure $J$ on $S$ defines a point in the \emph{period domain}
\[\mathcal{D}:=\{(v)\in \mathbb{P}(Q^-_{T,\mathbb{C}})\,|\,(v,v)=0,(v,\bar{v})>0\}.\]
$\mathcal{D}$ is an open subset of a smooth complex projective variety of dimension $10$, on which the isometry group $\Gamma:=O(Q_T^-)$ naturally acts. However, not all the points in $\mathcal{D}$ can be realized as the period. Applying Riemman-Roch to the K3 surface, if $l\in H^{1,1}(T;\mathbb{Z})$ is a $(-2)$-class, then either $l$ or $-l$ is effective. The condition $l\in H^{1,1}(T;\mathbb{Z})$ is equivalent to $l\cdot[\Omega]=l\cdot[\bar{\Omega}]=0$. Also note that an effective class can never be $\iota$-anti-invariant. We then see that all the periods which can be realized must be contained in the following smaller subset of $\mathcal{D}$ removing countably many hyperplanes:
\[\mathcal{D}_0:=\{(v)\in \mathcal{D}\,|\,(l,v)\neq 0\text{ for all } l\in Q_{T}^-\text{ with }l^2=-2\}.\]
The isometry group $\Gamma$ also acts this smaller subset. The significant achievement in algebraic geometry is the following Torelli theorem for Enriques surface, which is proved using Torelli theorem for K3 surface.
\begin{theorem}[\cite{Horikawa,Namikawa}]\label{thm:torelli}
    There is a bijection between the space of all complex structures on $S$ modulo the action by $\text{Diff}^+(S)$ with $\mathcal{D}_0/\Gamma$.
\end{theorem}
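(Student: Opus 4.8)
The plan is to deduce the statement from the Torelli theorem for K3 surfaces by transporting everything through the double cover $\pi: T\to S$ and its anti-invariant lattice $Q_T^-$. Concretely, I would first upgrade the assignment $J\mapsto[\Omega]$ discussed above into a genuine period map, then establish its surjectivity and injectivity modulo the relevant group actions, with the injectivity carrying essentially all of the difficulty.

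\textbf{The period map and its equivariance.} The excerpt already shows that $J\mapsto[\Omega_J]$ lands in $\mathcal{D}$; the sharper claim that it lands in $\mathcal{D}_0$ follows because a $(-2)$-class $l\in Q_T^-$ with $l\cdot[\Omega]=0$ would be of type $(1,1)$, whence $\pm l$ is effective, contradicting that an $\iota$-anti-invariant class can never be effective. To descend to the quotient I would prove the resulting map $\mathcal{P}$ is equivariant: any $\phi\in\text{Diff}^+(S)$ lifts to $\tilde\phi\in\text{Diff}(T)$ commuting with $\iota$ (the lift exists since $\phi$ respects $\pi_1(S)=\mathbb{Z}_2$ and hence the deck involution), so $\tilde\phi^*$ preserves the decomposition $Q_T=Q_T^+\oplus Q_T^-$ and restricts to an element of $\Gamma=O(Q_T^-)$, while naturality of the holomorphic $2$-form gives $\mathcal{P}(\phi^*J)=\tilde\phi^*\mathcal{P}(J)$. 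This yields a well-defined map $\overline{\mathcal{P}}:\{\text{complex structures}\}/\text{Diff}^+(S)\to\mathcal{D}_0/\Gamma$.

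\textbf{Surjectivity.} Given a line $v\in\mathcal{D}_0$, I would regard it as a candidate period for the K3 surface $T$ and invoke surjectivity of the K3 period map to produce a complex structure $\tilde J$ on $T$ whose holomorphic $2$-form spans $v$. Since $\iota^*v=-v$ preserves the resulting Hodge structure, $\iota^*$ is a Hodge isometry; after composing with a product of Weyl reflections so that it preserves the K\"ahler cone, the strong K3 Torelli theorem realizes it by a holomorphic involution of $(T,\tilde J)$. The defining condition $v\in\mathcal{D}_0$ is precisely what forces this involution to be fixed-point free, so the quotient is a smooth Enriques surface and determines a complex structure $J$ on $S$ with $\mathcal{P}(J)=v$. (Note that $Q_T^+\perp Q_T^-$ forces $Q_T^+\subseteq H^{1,1}$, and its signature $(1,9)$ provides a positive invariant class, so the cover is automatically projective.)

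\textbf{Injectivity and the main obstacle.} Suppose $J_1,J_2$ have $\Gamma$-equivalent periods, say $g\cdot\mathcal{P}(J_1)=\mathcal{P}(J_2)$ with $g\in O(Q_T^-)$. Lifting to K3 structures $(T,\tilde J_i)$ with involutions $\iota_i$, I want to extend $g$ to a Hodge isometry $\tilde g\in O(Q_T)$ of the full, \emph{unimodular} K3 lattice that commutes with $\iota^*$ and carries one period to the other; strong K3 Torelli then yields a biholomorphism $(T,\tilde J_1)\to(T,\tilde J_2)$ intertwining the involutions, which descends to $(S,J_1)\to(S,J_2)$ and hence to the desired element of $\text{Diff}^+(S)$. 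The hard part is exactly this extension: one must carry $g$ across the orthogonal summand $Q_T^+$ so that $\tilde g$ is an honest isometry of $2(-E_8)\oplus 3U$ commuting with $\iota^*$, and, after correcting by $(-2)$-reflections, maps the ample cone of $(T,\tilde J_1)$ to that of $(T,\tilde J_2)$. This is governed by Nikulin's theory of discriminant forms: the primitive embeddings $Q_T^\pm\hookrightarrow Q_T$ have matching discriminant groups, and an isometry of one summand extends to the unimodular overlattice precisely when it acts compatibly on the glue. Checking that \emph{every} $g\in\Gamma$ lifts in this way — so that $\Gamma$, rather than a proper subgroup, is the correct quotient group, consistently with the surjectivity of $\psi_S$ in Theorem \ref{thm:diffaction} — is the technical heart of the theorem and the step I expect to be most delicate.
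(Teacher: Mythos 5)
The paper offers no proof of this theorem --- it is imported verbatim from Horikawa and Namikawa --- so there is no internal argument to compare against; your sketch should be measured against the proofs in those references, and it reproduces their architecture faithfully: the period map through the K3 double cover landing in $\mathcal{D}_0$, surjectivity via surjectivity of the K3 period map together with the strong Torelli theorem to realize the Enriques involution, and injectivity via extending an isometry of $Q_T^-$ across the discriminant glue to a Hodge isometry of the full unimodular K3 lattice commuting with $\iota^*$. You also correctly locate the technical heart in that lattice-extension step (Nikulin's theory of discriminant forms), which is indeed where the real work lies and where one checks that all of $\Gamma=O(Q_T^-)$, not a proper subgroup, is realized.

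One attribution in the surjectivity step is off. The condition $v\in\mathcal{D}_0$ is not what forces the involution to be fixed-point free; rather, it is what guarantees that no $(-2)$-wall of the K\"ahler chamber decomposition of the positive cone of $H^{1,1}$ contains the entire invariant subspace $Q_T^+\otimes\mathbb{R}$ (such a wall would come exactly from a $(-2)$-class of $Q_T^-$ orthogonal to $v$), hence that an $\iota^*$-invariant K\"ahler class exists. This is what you need before strong Torelli can produce a holomorphic \emph{involution} at all: naively composing $\iota^*$ with a product of Weyl reflections, as you propose, need not yield a map that squares to the identity, so one instead conjugates by the Weyl element carrying an invariant chamber to the K\"ahler chamber. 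Freeness of the resulting holomorphic involution is then a separate argument depending only on the lattice data of $Q_T^+\cong U(2)\oplus E_8(-2)$ (via the Lefschetz fixed point formulas and a parity argument on the would-be quotient surface, or via Nikulin's classification of non-symplectic involutions), not on membership in $\mathcal{D}_0$. This is a local imprecision in an otherwise correct outline of the standard proof.
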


\begin{rmk}
    We just rephrase the statement in most literature of algebraic geometry using the terminology `marked Enriques surface' by fixing the manifold $S$ and modulo the diffeomorphism group action. The moduli space $\mathcal{D}_0/\Gamma$ can be given the structure of a connected quasiprojective variety of dimension 10.
\end{rmk}

By adjunction formula, an irreducible curve $D$ on Enriques surface must be a smooth rational $(-2)$-curve. If such a curve appears, then $p^{-1}(D)$ must be a disjoint union of two smooth rational $(-2)$-curves in $T$ since $\pi$ is unramified. Denote by $\alpha,\beta$ the classes of these two curves. They will satisfy $\iota^*(\alpha)=\beta$, $(\alpha,\beta)=0$. So $l:=\alpha-\beta$ will be a $(-4)$-class in $Q_T^-$ and we call such a $(-4)$-class \emph{of even type} (following \cite{Namikawa}). Note that for effective classes $\alpha$ and $\beta$, $[\Omega]\cdot \alpha=[\Omega]\cdot \beta=0$ and thus $[\Omega]\cdot l=0$. As a consequence, if we further zoom into a smaller subset of $\mathcal{D}_0$ removing countably hyperplanes determined by $(-4)$-classes:
\[\mathcal{D}_{gen}:=\{(v)\in\mathcal{D}_0\,|\,(v,l)\neq 0, l\in Q_T^-,l^2=-4, \text{ is of even type}\}.\]
We say a complex structure $J$ on $S$ is \emph{general} if its period point belong to this $\mathcal{D}_{gen}$. So a general Enriques surface doesn't contain any smooth $(-2)$-curve.

\subsection{$\Phi$-invariant, Seshadri constant and algebraic capacity}\label{section:threeinvariants}

The goal of this section is to show the relations among $\Phi$-invariant introduced by Cossec (\cite{Cossec}), Seshadri constant introduced by Demailly (\cite{Demailly}) and algebraic capacities introduced by Wormleighton (\cite{Worm1,Worm2}), all of which can be naturally viewed as functions over the normalized fundamental chamber $\Delta$ introduced in Section \ref{section:smooth}. From these relations we will obtain the information about the K\"{a}hler cone of $\tilde{S}$.

Recall that any $(b_1,\cdots,b_{10})\in \Delta$ satisfies \[3=b_1+b_2+\cdots +b_{10},1\geq b_1+b_2+b_3, b_1\geq b_2\geq \cdots \geq b_{10}>0\]
which gives rise to the class $\mathbf{b}:=l_0-\sum_{i=1}^{10}b_il_i\in H^2(S;\mathbb{R})$. The $\Phi$-invariant is defined for an Enriques surface $(S,J)$ and a big line bundle $L\in \text{Pic}(S)$:
\[\Phi_J(L):=\inf\{L\cdot F\,|\,F^2=0,F\text{ is } J\text{-effective},F\in\text{Pic}(S)\}.\]
Since either $F$ or $-F$ is effective when $F^2=0$ (Lemma \ref{lem:2D}), one can immediately see the independence of the complex structure $J$. It can be defined over $\Delta$ by
\[\Phi(b_1,\cdots,b_{10}):=\inf \{|\mathbf{b}\cdot F|\,|\,F^2=0,F\in\text{Num}(S)\}.\]

\begin{proposition}\label{prop:phi=b10}
    $\Phi(b_1,\cdots,b_{10})=b_{10}$.
\end{proposition}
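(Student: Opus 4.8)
The plan is to work in the coordinates supplied by the isomorphism $\Psi$, exhibit one isotropic class that realizes the value $b_{10}$, and then show that no isotropic class can do strictly better by pushing everything into the fundamental chamber $\tilde{\Delta}$. First I would record the relevant coordinate formulas. Writing a class of $\text{Num}(S)=k^{\perp}$ as $F=al_0-\sum_{i=1}^{10}m_il_i$ with $a,m_i\in\mathbb{Z}$, the condition $F\cdot k=0$ reads $\sum_i m_i=3a$, the isotropy condition $F^2=0$ reads $\sum_i m_i^2=a^2$, and a direct pairing gives $\mathbf{b}\cdot F=a-\sum_{i=1}^{10}b_im_i$. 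I would also note that the defining inequalities of $\Delta$ say precisely that $\mathbf{b}$ is \emph{dominant}, i.e. $\mathbf{b}\cdot r_i\ge 0$ for every simple root $r_0,r_1,\dots,r_9$ (indeed $\mathbf{b}\cdot r_i=b_i-b_{i+1}\ge 0$ for $i\ge 1$ and $\mathbf{b}\cdot r_0=1-b_1-b_2-b_3\ge 0$).

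For the upper bound I take $F=s_1=3l_0-l_1-\cdots-l_9$, which is primitive and isotropic; then $\mathbf{b}\cdot s_1=3-(b_1+\cdots+b_9)=b_{10}$, so $\Phi(\mathbf{b})\le b_{10}$.

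For the lower bound, let $F\neq 0$ be any isotropic class; replacing $F$ by $-F$ if necessary, I may assume $a>0$, i.e. $F$ lies in the forward cone. I then run the reduction algorithm from the proof of Lemma \ref{lem:fundamentaldomain}: sort the $m_i$ into non-increasing order using the adjacent transpositions $R_{r_i}$ ($1\le i\le 9$) and, whenever $m_1+m_2+m_3>a$, apply the Cremona reflection $R_{r_0}$. Exactly as there, each Cremona step strictly decreases the positive integer $a$, so after finitely many steps $F$ is carried to a class $F_0\in\overline{\tilde{\Delta}}$ that is still integral, isotropic, and forward. Two facts then finish the argument. First, every reflection used is an $R_r$ with $r\in\{r_0,\dots,r_9\}$ applied precisely when $F\cdot r<0$, while $\mathbf{b}\cdot r\ge 0$ by dominance; since $r^2=-2$ the reflection \emph{adds}, $R_r(F)=F+(F\cdot r)r$, so the identity $\mathbf{b}\cdot R_r(F)=\mathbf{b}\cdot F+(F\cdot r)(\mathbf{b}\cdot r)$ shows each step does not increase $\mathbf{b}\cdot F$; hence $\mathbf{b}\cdot F\ge \mathbf{b}\cdot F_0$. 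Second, $F_0$ is a nonnegative multiple of $s_1$: the chamber $\overline{\tilde{\Delta}}$ is the cone over the convex hull $\overline{\Delta}$ of $V_1,\dots,V_9$, and among these vertices only $V_9=\frac{1}{3}s_1$ is isotropic while $V_1,\dots,V_8$ lie in the open forward cone $\mathcal{P}'_S$; since two vectors in the closed forward cone pair nonnegatively (reverse Cauchy--Schwarz in signature $(1,10)$), any conic combination involving some $V_i$ with $i\le 8$ is strictly positive, so an isotropic element of $\overline{\tilde{\Delta}}$ must lie on the ray $\mathbb{R}_{\ge 0}s_1$. As $F_0$ is integral and $s_1$ is primitive, $F_0=c\,s_1$ with $c\in\mathbb{Z}_{>0}$. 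Combining the two facts gives $\mathbf{b}\cdot F\ge \mathbf{b}\cdot F_0=c\,b_{10}\ge b_{10}$; applying this to $-F$ as well yields $|\mathbf{b}\cdot F|\ge b_{10}$ for every nonzero isotropic $F$, so $\Phi(\mathbf{b})\ge b_{10}$, and equality holds.

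The main obstacle I expect is the second fact, namely identifying $\mathbb{R}_{\ge 0}s_1$ as the \emph{unique} isotropic ray of the fundamental chamber; this is exactly where the explicit vertex list $V_1,\dots,V_9$ and the convexity of the forward cone are indispensable. The bookkeeping in the first fact---keeping the sign of $\mathbf{b}\cdot r$ straight under the convention $r^2=-2$ so that $R_r$ adds rather than subtracts---is the other place to be careful, but it is mechanical once the reduction of Lemma \ref{lem:fundamentaldomain} is available.
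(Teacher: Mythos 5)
Your proof is correct, but it takes a genuinely different route from the paper's. The paper fixes $s_1$ as the candidate minimizer and, using that $\overline{\Delta}$ is the convex hull of $V_1,\dots,V_9$, rules out any better isotropic class by evaluating at each vertex separately and deriving a contradiction from Cauchy--Schwarz in nine cases. You instead move the competing class $F$ (rather than the polarization $\mathbf{b}$) into the fundamental chamber via the reduction algorithm of Lemma \ref{lem:fundamentaldomain}, note that each reflection $R_r$ is applied only when $F\cdot r<0$ while $\mathbf{b}\cdot r\ge 0$ by dominance, so $\mathbf{b}\cdot F$ can only decrease, and then identify the reduced class as a positive integer multiple of $s_1$ because $V_9=\tfrac13 s_1$ is the unique isotropic vertex and cross-terms of vectors in the closed forward cone are nonnegative. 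Two points you use slightly beyond what Lemma \ref{lem:fundamentaldomain} literally establishes are worth a sentence each in a written version: the algorithm terminates on nonzero isotropic classes as well (reflections of real spinor norm one preserve the closed forward cone, so $a$ remains a positive integer and strictly drops at each Cremona step), and the normalized $a=1$ slice of $\tilde{\Delta}$ really equals $\overline{\Delta}$ (the unstated inequality $b_{10}\ge 0$ follows from $\sum b_i=3$, $b_1+b_2+b_3\le 1$ and monotonicity). What your approach buys is a structural explanation of why $s_1$ is optimal --- it spans the unique isotropic ray of the chamber, and dominant classes pair minimally with it --- replacing nine ad hoc vertex estimates by one monotonicity observation; the paper's approach is more self-contained in that it needs only convexity of $\overline{\Delta}$ and Cauchy--Schwarz, not the Coxeter reduction. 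Both arguments ultimately rest on the same explicit vertex list.
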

\begin{proof}
    We only need to show the class $s_1=3l_0-l_1-\cdots-l_9$ of square $0$ is the best minimizer for $\Phi$ over the polytope $\overline{\Delta}$. If $F=cl_0-\sum_{i=1}^{10}d_il_i\in Q_S$ of square $0$ is another class such that $|\mathbf{b}\cdot F|<|\mathbf{b}\cdot s_1|$, then since $\overline{\Delta}$ is the convex hull of its vertices $V_1,\cdots,V_9$, there must be some $V_i$ where the pairing with $F$ is smaller than $s_1$. We will show this can not happen case by case. Let $\mathbf{v}_i$ be the class in $H^2(S;\mathbb{R})$ corresponding to $V_i$. Note that we have the conditions $c^2=\sum_{i=1}^{10}d_i^2$ and $3c=\sum_{i=1}^{10}d_i$ and may further assume $c>0$ without loss of generality.
    \begin{itemize}
        \item At $V_1$: if $\mathbf{v_1}\cdot F<\mathbf{v}_1\cdot s_1$, then
        \[\frac{1}{7}(c-d_1)=c-\frac{1}{7}(3d_1-2\sum_{i=2}^{10}d_i)<\frac{2}{7}\] and we must have $d_1=c-1$. By Cauchy-Schwarz inequality, 
        \[c^2-(c-1)^2=\sum_{i=2}^{10}d_i^2\geq \frac{1}{9}(\sum_{i=2}^{10}d_i)^2=\frac{1}{9}(3c-(c-1))^2.\]
        This will imply $c=1,2$ which are impossible by simple observation.
        \item At $V_2$: if $\mathbf{v_2}\cdot F<\mathbf{v}_2\cdot s_1$, then
       \[\frac{1}{14}(2c-d_1-d_2)=c-\frac{1}{14}(5d_1-5d_2-4\sum_{i=3}^{10}d_i)<\frac{4}{14}\] and we must have $d_1+d_2\geq 2c-3$ so that $d_1$ or $d_2$ is $\geq c-1$. Now by the previous case we know this can not happen.
        \item At $V_k$ for $3\leq k\leq 9$: from the pattern of $V_k$ we see that if $\mathbf{v_k}\cdot F<\mathbf{v}_k\cdot s_1$, then 
        \[\frac{\sum_{i=k+1}^{10}d_i}{3(10-k)}=c-\frac{(10-k)\sum_{i=1}^kd_i+(9-k)\sum_{i=k+1}^{10}d_i}{3(10-k)}<\frac{9-k}{3(10-k)},\]
        which would imply at least one of $d_i$ is negative. However, by Cauchy-Schwarz inequality it's impossible to have nine numbers whose sum is $\geq 3c+1$ but the sum of square is $\leq c^2-1$. So these cases would not happen.
    \end{itemize}
\end{proof}

Now we introduce the algebraic capacities. The $k$-th algebraic capacity is defined for $(S,J)$ and a nef line bundle $L$
\[c_{k,J}^{\text{alg}}(L):=\inf \{L\cdot F\,|\,F^2\geq 2k, F\text{ is }J\text{-nef}, F\in \text{Pic}(S)\}.\]
The actual definition for algebraic capacities has to replace $F^2$ term by $F^2-K\cdot F$, which is the (real) Gromov-Taubes dimension introduced in Section \ref{section:symp}. Since we are dealing with Enriques surface, they are certainly equal. This notion has been generalized in \cite{tamecapacity} to the triple $(X,\omega,J)$ of a symplectic $4$-manifold with a tamed almost complex structure. It's shown in \cite{tamecapacity} that when $X$ is a rational manifold, the capacities \emph{only} depend on the cohomology class $[\omega]$ but not on the tamed almost complex structures. Now we proceed to prove a similar result for Enriques surface, but only for $k=0$ and complex structures. 

\begin{lemma}\label{lem:nefcone}
    For any complex structure $J$ on $S$, there exists a self-diffeomorphism $f$ on $S$ such that the ample cone of $(S,f^*J)$ contains the interior of $\tilde{\Delta}$ (defined in the proof of Lemma \ref{lem:fundamentaldomain}).
\end{lemma}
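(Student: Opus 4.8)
The plan is to use the fact that the ample cone of $(S,J)$ is tiled by the chambers of the reflection group $O'(Q_S)$, of which $\tilde{\Delta}$ is a fundamental one, and then to move a chamber sitting inside the ample cone onto $\tilde{\Delta}$ by a self-diffeomorphism. Write $A_J\subseteq\mathcal{P}'_S$ for the ample cone of $(S,J)$. Since $(S,J)$ is algebraic with numerically trivial canonical class, every irreducible curve of negative self-intersection is a smooth rational $(-2)$-curve, and an effective irreducible curve $C$ with $C^2\geq 0$ lies in $\overline{\mathcal{P}'_S}$ and hence pairs non-negatively with every forward-cone class. Consequently the nef cone is exactly the set of classes in $\overline{\mathcal{P}'_S}$ pairing non-negatively with all $(-2)$-curves of $(S,J)$, and $A_J$ is its interior, namely the intersection of $\mathcal{P}'_S$ with the open half-spaces $\{x\cdot\delta>0\}$ as $\delta$ runs over the (at most countably many) classes of $(-2)$-curves of $(S,J)$. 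In particular, every finite-distance wall of $A_J$ inside the open forward cone lies on a $(-2)$-hyperplane $\delta^{\perp}$ with $\delta^2=-2$.

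Next I would bring in the chamber structure used in Lemma \ref{lem:fundamentaldomain}. The hyperplanes $\delta^{\perp}$ for all $(-2)$-classes $\delta\in\mathrm{Num}(S)$ cut the open forward cone into chambers on which the Coxeter group $O'(Q_S)$ acts simply transitively, with $\tilde{\Delta}$ the closure of the fundamental chamber. Because the walls of $A_J$ form a sub-collection of these $(-2)$-hyperplanes, each open chamber is either disjoint from $A_J$ or entirely contained in it: a chamber never crosses any $(-2)$-hyperplane, so it stays on one side of each wall $\delta^{\perp}$ of $A_J$, and if it meets the open set $A_J$ it must lie on the ample side of every such wall. Since $A_J$ is open and non-empty, it meets, and therefore contains, at least one full open chamber $C$. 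Here it is worth noting that, in contrast to the K3 case where Riemann--Roch forces every $(-2)$-class to be effective or anti-effective, on an Enriques surface a $(-2)$-class need be neither; thus $A_J$ is typically a union of several chambers rather than a single one, which is exactly why the statement asserts containment of $\mathrm{int}(\tilde{\Delta})$ rather than equality.

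Finally, by simple transitivity there is $g\in O'(Q_S)$ with $g(C)=\mathrm{int}(\tilde{\Delta})$. By Theorem \ref{thm:diffaction} the map $\psi_S$ is onto $O(Q_S)$ and $O'(Q_S)$ is realized by products of reflections in smoothly embedded $(-2)$-spheres, so I can choose a self-diffeomorphism $f$ inducing $g$ on $Q_S$ (replacing $g$ by $g^{-1}$ if one's convention identifies $\psi_S(f)$ with $f_*$ rather than $f^*$). Since $f\colon(S,f^*J)\to(S,J)$ is a biholomorphism, $f^*$ carries $A_J$ onto the ample cone of $(S,f^*J)$, whence that ample cone equals $f^*(A_J)=g(A_J)\supseteq g(C)=\mathrm{int}(\tilde{\Delta})$, as desired. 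The point I expect to require the most care is the interplay between the two reflection groups: the surface-theoretic Weyl group generated by the actual $(-2)$-curves, which carves out $A_J$, must be recognized as a subgroup of the diffeomorphism-realizable $O'(Q_S)$ for which $\tilde{\Delta}$ is a fundamental domain. Once this is in place, the remaining delicate points are verifying that $A_J$ genuinely contains a full chamber (rather than merely meeting the decomposition) and keeping track of the direction of the $f^*$-action; the simple transitivity of $O'(Q_S)$ on chambers and the surjectivity of $\psi_S$ then finish the argument.
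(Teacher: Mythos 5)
Your argument is correct and is essentially the paper's proof: both identify the ample cone via Nakai--Moishezon as the locus in the forward cone pairing positively with the $(-2)$-curves, observe that $\mathrm{int}(\tilde{\Delta})$ is a single chamber of the $(-2)$-hyperplane arrangement that $O'(Q_S)$ permutes transitively (the paper phrases this as moving one generic ample point into $\mathrm{int}(\tilde{\Delta})$ and propagating positivity by connectedness and triviality of stabilizers, which amounts to your ``the ample cone contains a full chamber''), and realize the required isometry by a diffeomorphism via Theorem \ref{thm:diffaction}. The only point you elide is that the ample cone of a given $J$ need not a priori lie in the distinguished component $\mathcal{P}'_S$; the paper first composes with the diffeomorphism inducing $-\mathrm{id}$ on $Q_S$ to arrange this.
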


\begin{proof}
    By possibly composing the diffeomorphism inducing $-\text{Id}$ on cohomology, we may assume the ample cone is contained in the forward cone $\mathcal{P}_S'$. By \cite[Section 5.13]{Coxeter}, an interior point of $\tilde{\Delta}$ has trivial stabilizer group under the action of $O'(Q_S)$. So its pairing with any $(-2)$-class is non-zero otherwise the stabilizer group will contain the reflection along that $(-2)$-class. Since the ample cone is open and $\tilde{\Delta}$ is the fundamental domain, we could always compose some diffeomorphism such that the ample cone contains some point in the interior of $\tilde{\Delta}$, which pairs positively with all effective $(-2)$-classes. By connectedness of $\tilde{\Delta}$ we see that the entire interior of $\tilde{\Delta}$ must pair positively with all effective $(-2)$-classes. The result then follows from Nakai-Moishezon criterion.
\end{proof}

By the above lemma, for any complex structure $J$ on $S$, there is another $J'$ with the same period as $J$ such that the nef cone of $(S,J')$ contains the closure of $\tilde{\Delta}$. It thus makes sense to extend $c^{\text{alg}}_{k,J'}$ as a function over $\Delta$ by substituting the integral class $L$ in the original definition with the real class $\mathbf{b}$. 

\begin{proposition}\label{prop:phi=algcapa}
If the ample cone of $(S,J)$ contains the interior of $\tilde{\Delta}$, then  $\Phi=c_{0,J}^{\rm{alg}}$ as functions on $\Delta$.    
\end{proposition}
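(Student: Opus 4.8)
The plan is to prove the two inequalities $c_{0,J}^{\mathrm{alg}} \le \Phi$ and $c_{0,J}^{\mathrm{alg}} \ge \Phi$ separately over $\Delta$, using Proposition \ref{prop:phi=b10}, which already gives $\Phi(b_1,\dots,b_{10}) = b_{10} = \mathbf{b}\cdot s_1$ for $s_1 = 3l_0 - l_1 - \cdots - l_9$. For the upper bound I would first note that $s_1$ is three times the vertex $V_9$, hence lies in $\overline{\tilde{\Delta}}$; by the standing hypothesis that the ample cone contains the interior of $\tilde{\Delta}$, its closure (the nef cone) contains all of $\overline{\tilde{\Delta}}$, so $s_1$ is $J$-nef. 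Since $s_1^2 = 0 \ge 0$, the class $s_1$ is admissible in the infimum defining $c_{0,J}^{\mathrm{alg}}$, and therefore $c_{0,J}^{\mathrm{alg}}(\mathbf{b}) \le \mathbf{b}\cdot s_1 = b_{10} = \Phi(\mathbf{b})$ for every $\mathbf{b}\in\Delta$.

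For the lower bound I would take an arbitrary $J$-nef class $F$ with $F^2 \ge 0$ and show $\mathbf{b}\cdot F \ge b_{10}$. Since $\mathbf{b}$ and $F$ both lie in the closed forward cone, forward-cone positivity in signature $(1,10)$ gives $\mathbf{b}\cdot F \ge 0$ at the outset. I would then split on the parity-constrained value of $F^2$ (even, since the Enriques lattice is even). If $F^2 = 0$, then $F$ is one of the competitors in the definition of $\Phi$, so $\mathbf{b}\cdot F = |\mathbf{b}\cdot F| \ge \Phi(\mathbf{b}) = b_{10}$ with no further work. If $F^2 \ge 2$, I would invoke the Hodge index theorem for $\mathbf{b}$ (which has $\mathbf{b}^2 > 0$ throughout $\Delta$): this yields $(\mathbf{b}\cdot F)^2 \ge \mathbf{b}^2\, F^2 \ge 2\mathbf{b}^2$, so it suffices to verify the purely numerical inequality $2\mathbf{b}^2 \ge b_{10}^2$ on $\overline{\Delta}$. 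Writing $\mathbf{b}^2 = 1 - \sum_i b_i^2$, the function $\mathbf{b}\mapsto 2\mathbf{b}^2 - b_{10}^2$ is a sum of concave quadratics, hence concave, so its minimum on the polytope $\overline{\Delta}$ is attained at one of the vertices $V_1,\dots,V_9$. A direct check at these nine rational points shows it is non-negative, closing the estimate via $\mathbf{b}\cdot F \ge \sqrt{2\mathbf{b}^2} \ge b_{10}$.

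The hard part will be the case $F^2 \ge 2$: the pool of competing nef classes is genuinely richer than the isotropic ones governing $\Phi$, and a brute-force lattice enumeration of such $F$ (in the spirit of the vertex-by-vertex analysis of Proposition \ref{prop:phi=b10}) would be unwieldy. The key simplification I expect to exploit is that the Hodge index theorem collapses this case into a single quadratic inequality that no longer references $F$, after which concavity reduces the whole question to evaluating at the nine vertices—precisely the finite data already at hand. A point worth stating cleanly is that this lower bound in fact uses only that $F$ lies in $\overline{\mathcal{P}'_S}$ with $F^2 \ge 0$, so it applies verbatim to the actual (possibly smaller) $J$-nef cone; the hypothesis on the ample cone enters only through the matching upper bound, where it guarantees that $s_1$ remains $J$-nef.
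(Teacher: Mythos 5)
Your argument is correct, and its first half coincides with the paper's: both of you observe that $s_1=3l_0-l_1-\cdots-l_9$ lies in $\overline{\tilde{\Delta}}$ (indeed $\tfrac13 s_1=V_9$), hence is $J$-nef under the standing hypothesis, which gives $c_{0,J}^{\mathrm{alg}}\le \mathbf{b}\cdot s_1=b_{10}=\Phi$. Where you genuinely diverge is the lower bound against nef competitors of positive square. The paper's proof simply reruns the vertex-by-vertex lattice analysis of Proposition \ref{prop:phi=b10}, replacing the equality $c^2=\sum_i d_i^2$ by the inequality $c^2>\sum_i d_i^2$ and excluding a hypothetical better minimizer $F$ at each of the nine vertices by Cauchy--Schwarz on its coefficients. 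You instead apply the signature-$(1,10)$ reverse Cauchy--Schwarz (Hodge index) inequality $(\mathbf{b}\cdot F)^2\ge \mathbf{b}^2F^2$, use evenness of the Enriques lattice to upgrade $F^2>0$ to $F^2\ge 2$, and thereby eliminate $F$ from the problem entirely, reducing the whole case to the single inequality $2\mathbf{b}^2\ge b_{10}^2$; since $2\mathbf{b}^2-b_{10}^2=2-2\sum_{i=1}^{9}b_i^2-3b_{10}^2$ is concave, its minimum over $\overline{\Delta}$ is attained at a vertex, and the nine evaluations all come out non-negative (with equality only at $V_9$). This is a cleaner and more robust route: the only finite check is of a fixed quadratic in $\mathbf{b}$ rather than a Diophantine exclusion of lattice classes, and, as you note, it uses only $F\in\overline{\mathcal{P}'_S}$ and $F^2\ge 2$, so it would survive any shrinking of the nef cone. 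The isotropic case $F^2=0$ is handled identically in both proofs by reduction to $\Phi$ via Proposition \ref{prop:phi=b10} together with $\mathbf{b}\cdot F\ge 0$. (Both you and the paper tacitly restrict the infima to non-torsion, nonzero classes, without which $c_{0,J}^{\mathrm{alg}}$ and $\Phi$ would degenerate to $0$; this is a shared convention, not a gap.)
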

\begin{proof}
    We need to show the class $s_1=3l_0-l_1-\cdots-l_9$ is also the best minimizer for $c_{0,J}^{\rm{alg}}$ over the entire $\Delta$. By Proposition \ref{prop:phi=b10}, this amounts to show that $s_1$ is $J$-nef and there is no better minimizer with positive square. For the first thing, just note that the class $s_1$ is actually in the closure of $\tilde{\Delta}$ ($\frac{1}{3}s_1$ is the vertex $V_9$). Since the ample cone contains the interior of $\tilde{\Delta}$, the class $s_1$ must be $J$-nef. For the second thing, just go through the same argument as the proof of Proposition \ref{prop:phi=b10} with the only modification by replacing the equality $c^2=\sum_{i=1}^{10}d_i^2$ with the inequality $c^2>\sum_{i=1}^{10}d_i^2$.
\end{proof}

Since a general complex structure doesn't allow any $(-2)$-curves, the following corollary is immediate.
\begin{corollary}
    $\Phi=c_{0,J}^{\rm{alg}}$ as functions on $\Delta$ for all general complex structures $J$.  
\end{corollary}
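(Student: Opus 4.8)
The plan is to verify the hypothesis of Proposition~\ref{prop:phi=algcapa} directly for any general complex structure $J$, namely that the ample cone of $(S,J)$ contains the interior of $\tilde{\Delta}$; once this is done, the equality $\Phi=c_{0,J}^{\mathrm{alg}}$ on $\Delta$ is exactly the content of that proposition, with no auxiliary diffeomorphism needed (in contrast to Lemma~\ref{lem:nefcone}, where one first had to move $J$ by an element of $\mathrm{Diff}^+(S)$). So the entire task reduces to a statement about the ample cone of a general Enriques surface.

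First I would unwind the definition of generality: the period point lies in $\mathcal{D}_{gen}$, so $(S,J)$ carries no smooth rational $(-2)$-curve. Next, for any irreducible curve $C\subseteq S$, the adjunction formula together with the numerical triviality of $K$ gives $C^2=2p_a(C)-2\geq -2$, with equality precisely when $C$ is a smooth rational curve. Hence, under the generality assumption, \emph{every} irreducible curve satisfies $C^2\geq 0$. Pairing any such $C$ with a fixed ample class (which exists since $S$ is algebraic) shows that $C$ lies in the closed forward cone $\overline{\mathcal{P}_S'}$.

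Then I would invoke the Nakai--Moishezon criterion. A class $x\in\mathcal{P}_S'$ has $x^2>0$, and by the light-cone lemma (Hodge index theorem) one has $x\cdot C>0$ for every nonzero $C\in\overline{\mathcal{P}_S'}$, in particular for every irreducible curve. Therefore every $x\in\mathcal{P}_S'$ is ample, i.e.\ the ample cone coincides with the forward cone $\mathcal{P}_S'$. Since it was already observed that $\mathbf{b}\in\mathcal{P}_S'$ for all points of $\Delta$, the interior of $\tilde{\Delta}$ is contained in $\mathcal{P}_S'$ and hence in the ample cone. Proposition~\ref{prop:phi=algcapa} now applies verbatim and yields $\Phi=c_{0,J}^{\mathrm{alg}}$ on $\Delta$.

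The only genuinely delicate point is the passage from the integral Nakai--Moishezon criterion to the real classes $\mathbf{b}$: I would either appeal to the real-coefficient version of Nakai--Moishezon, or equivalently note that the ample cone is open and contains a dense set of rational points of $\mathcal{P}_S'$, so by convexity and openness it must contain all of $\mathcal{P}_S'$. Everything else---the adjunction computation forcing $C^2\geq 0$, the light-cone positivity, and the inclusion $\mathrm{int}\,\tilde{\Delta}\subseteq\mathcal{P}_S'$ (already checked at the vertices $V_1,\dots,V_9$)---is routine, which is precisely why the corollary is immediate once Proposition~\ref{prop:phi=algcapa} is in hand.
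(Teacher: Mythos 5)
Your argument is correct and is precisely the reasoning the paper leaves implicit when it calls the corollary ``immediate'': generality excludes all $(-2)$-curves, so by adjunction every irreducible curve has non-negative square, the light-cone lemma plus Nakai--Moishezon then make the ample cone a full component of the positive cone containing the interior of $\tilde{\Delta}$, and Proposition~\ref{prop:phi=algcapa} applies. The one caveat (shared by the paper) is that this component could a priori be $-\mathcal{P}_S'$ rather than $\mathcal{P}_S'$, so your claim that no auxiliary diffeomorphism is needed is slightly too strong --- one may still have to compose with the diffeomorphism inducing $-\mathrm{id}$ from Theorem~\ref{thm:diffaction} before the interior of $\tilde{\Delta}$ lands in the ample cone.
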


Finally we introduce the Seshadri constant. For a nef line bundle $L$ over $(S,J)$ and a point $s\in S$, the Seshadri constant is defined to be
\[\varepsilon_{J,s}(L):=\inf\{\frac{L\cdot C}{\nu (C,s)}\,|\,C \text{ is an irreducible curve passing through }s\}\]
where $\nu (C,s)$ is the multiplicity of $C$ at $s$. Again, if the ample cone of $(S,J)$ contains the interior of $\tilde{\Delta}$, then $\varepsilon_{J,s}$ can be viewed as a function on $\Delta$ by pairing with any real class $\mathbf{b}$ in the definition. Observe that since $p_g(S)=0$, by Nakai-Moishezon criterion, Corollary \ref{cor:diffonblowup} and Lemma \ref{lem:nefcone}, if we define $\mathcal{K}:\Delta\rightarrow\mathbb{R}^+$ by
\[\mathcal{K}(b_1,\cdots,b_{10}):=\sup\{t\,|\,\mathbf{b}-te\text{ can be represented by a K\"{ahler} form on } \tilde{S}\},\]
then the relation between $\varepsilon_{J,s}$ and $\mathcal{K}$ is given by
\[\mathcal{K}(b_1,\cdots,b_{10})=\sup \{\varepsilon_{J,s}(\mathbf{b})\,|\,J\text{'s ample cone contains Int}(\tilde{\Delta}),s\in S\}.\]

\begin{proposition}\label{prop:2phi}
    $\mathcal{K}\leq 2 \Phi$ as functions over $\Delta$.
\end{proposition}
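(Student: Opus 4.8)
The plan is to reduce the inequality to a pointwise bound on the Seshadri constant and then to realize that bound using the elliptic pencil produced by the $\Phi$-minimizer. By the relation
\[\mathcal{K}(b_1,\dots,b_{10})=\sup\{\varepsilon_{J,s}(\mathbf{b})\mid J\text{'s ample cone contains }\mathrm{Int}(\tilde{\Delta}),\ s\in S\},\]
recorded just above, together with $\Phi=b_{10}$ (Proposition \ref{prop:phi=b10}), it suffices to prove $\varepsilon_{J,s}(\mathbf{b})\le 2\Phi$ for each admissible $J$ and each $s\in S$. Since $\varepsilon_{J,s}(\mathbf{b})$ is an \emph{infimum} over irreducible curves through $s$, I only need to exhibit, for every $s$, a single irreducible curve $C\ni s$ with $\frac{\mathbf{b}\cdot C}{\nu(C,s)}\le 2\Phi$.

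First I would take the $\Phi$-minimizing class $F=s_1=3l_0-l_1-\cdots-l_9$, which satisfies $F^2=0$, is primitive, and has $\mathbf{b}\cdot F=b_{10}=\Phi$. Because $F$ lies in $\overline{\tilde{\Delta}}$ while the ample cone of $J$ contains $\mathrm{Int}(\tilde{\Delta})$, the class $F$ is $J$-nef (and $\mathbf{b}$ is itself nef, being a limit of ample classes); since $\mathbf{b}\cdot F=b_{10}>0$, Lemma \ref{lem:2D} forces $F$ rather than $-F$ to be $J$-effective. Thus $F$ is a primitive, $J$-effective, $J$-nef class of square zero, so Lemma \ref{lem:2D} yields an elliptic pencil in class $D$ with $D\in\{F,2F\}$. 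In either case $\mathbf{b}\cdot D\le \mathbf{b}\cdot 2F=2\Phi$, and the pencil provides a family of $J$-holomorphic curves of class $D$ covering $S$.

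To finish, I fix $s\in S$ and take a member $M$ of this covering family through $s$; it is an effective divisor of class $D$, which I write as $M=\sum_i m_i C_i$ with the $C_i$ irreducible and $m_i\ge 1$. Choosing an index $i$ with $s\in C_i$ and using that $\mathbf{b}$ is nef (so every $\mathbf{b}\cdot C_j\ge 0$), I obtain
\[\mathbf{b}\cdot C_i\le \sum_j m_j(\mathbf{b}\cdot C_j)=\mathbf{b}\cdot D\le 2\Phi.\]
Since $C_i$ passes through $s$ we have $\nu(C_i,s)\ge 1$, hence $\frac{\mathbf{b}\cdot C_i}{\nu(C_i,s)}\le \mathbf{b}\cdot C_i\le 2\Phi$, which gives $\varepsilon_{J,s}(\mathbf{b})\le 2\Phi$. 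Taking the supremum over all admissible $J$ and all $s$ then yields $\mathcal{K}\le 2\Phi$.

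The one place demanding care is that the fiber of the pencil through a given $s$ need not be a smooth, or even irreducible or reduced, curve: it can be a reducible configuration of $(-2)$-curves, or one of the two multiple half-fibers of class $F$. This is exactly why I pass from the whole member $M$ to an irreducible component $C_i$ through $s$ and control its $\mathbf{b}$-degree by the nefness estimate $\mathbf{b}\cdot C_i\le \mathbf{b}\cdot D$, rather than trying to use $M$ itself as a test curve. The remaining inputs—primitivity, nefness and the correct-sign effectivity of $F$ so that Lemma \ref{lem:2D} applies, and the fact that all members of the pencil lie in the single class $D$—are straightforward once the hypothesis that the ample cone contains $\mathrm{Int}(\tilde{\Delta})$ is in force.
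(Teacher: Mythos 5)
Your proof is correct and follows essentially the same route as the paper: both arguments feed the $\Phi$-minimizer $s_1$ into Lemma \ref{lem:2D} to obtain an elliptic pencil in class $F$ or $2F$ covering $S$, and deduce $\sup_s\varepsilon_{J,s}(\mathbf{b})\le 2\Phi$ before taking the supremum over admissible $J$. The only differences are cosmetic: the paper phrases the intermediate bound via $2c_{0,J}^{\mathrm{alg}}$ and Proposition \ref{prop:phi=algcapa} while you work directly with $s_1$, and you spell out the passage to an irreducible component of a possibly reducible or non-reduced member of the pencil (using nefness of $\mathbf{b}$ and $\nu(C_i,s)\ge 1$), a detail the paper's one-line proof leaves implicit.
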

\begin{proof}
    Lemma \ref{lem:2D} tells us if $F^2=0$ and $F$ is both $J$-effective and $J$-nef, then there is a $J$-holomorphic curve in class $2F$ passing through any point on $S$. Consequently, from the definition we have \[\sup \{\varepsilon_{J,s}(\mathbf{b})\,|\,s\in S\}\leq 2c_{0,J}^{\text{alg}}(\mathbf{b})\]
    for all complex structures $J$ whose ample cone contains the interior of $\tilde{\Delta}$. Now applying Proposition \ref{prop:phi=algcapa} and taking the supremum over all such complex structures $J$ for the above inequality, we obtain that $\mathcal{K}\leq 2 \Phi.$
\end{proof}

\section{Enriques surface as symplectic manifold}\label{section:symp}
In this section will treat $S$ and $\tilde{S}$ as symplectic $4$-manifolds by equipping then with any symplectic form $\omega$ and $\tilde{\omega}$ compatible with the orientation. A basic fact from \cite[Theorem 1]{LiLiucone} we should keep in mind is the uniqueness of the first Chern class. That is, $c_1(S,\omega)$ must be the torsion element in $H^2(S;\mathbb{Z})$ and $c_1(\tilde{S},\tilde{\omega})$ must be $c_1(S,\omega)\pm e$ where the sign in front of $e$ depends on the sign of $\tilde{\omega}(e)$. We always assume the cohomology class $[\omega]$ or $[\tilde{\omega}]$ lies in the connected component $\mathcal{P}_S'$ or $\mathcal{P}_{\tilde{S}}'$. Such a choice will make the invariants introduced in the sequel unambiguous. 
\subsection{Gromov-Taubes and Seiberg-Witten invariants}
For a closed symplectic $4$-manifold $(X,\omega)$ with $b_2^+=1$, we can consider its Gromov-Taubes invariant introduced in \cite{Taubescounting}. It is a function
\[Gr_{\omega}: H^2(X;\mathbb{Z})\rightarrow \mathbb{Z}\]
which is defined to be $0$ whenever $B\in H^2(X;\mathbb{Z})$ has negative complex Gromov-Taubes dimension $$d(B):=\frac{1}{2}(B^2+c_1(X,\omega)\cdot B);$$ is defined to be a sophisticated count of embedded, but may be non-connected and with multiplicities, pseudoholomorphic submanifolds passing through $d(B)$ points whose homology class is Poincar\'{e} dual to $B$ for a generic choice of $\omega$-tame almost complex structure $J$ and $d(B)$ points on $S$ when $d(B)\geq 0$ and $B\neq 0$; and is defined to be $1$ if $B=0$. Note that \cite[Theorem 1.2]{McDufflecturenotes} shows that when the almost complex structure is generic, all the components with negative self-intersection must be exceptional spheres. Therefore the non-vanishing of the invariant for a non-zero class $B$ will then imply the existence of an embedded symplectic surface representing the class $B$ with no components having self-intersection $\leq -2$. When $(X,\omega)$ is not minimal, let $\mathcal{E}_{\omega}$ be the set of classes represented by embedded symplectic exceptional spheres. To circumvent the subtlety of multiply-covered exceptional spheres, \cite{McDufflecturenotes} proposed a modified version of this invariant 
$$Gr'_{\omega}: H^2(X;\mathbb{Z})\rightarrow \mathbb{Z}$$ by assigning $Gr'_{\omega}(B)$ to $Gr_{\omega}(B')$ where 
$$B':=B+\sum_{E\in \mathcal{E}_\omega, E\cdot B<-1}(E\cdot B)E.$$

On the other hand, Seiberg-Witten theory (see \cite{Salamonbook} or \cite{Nicobook}) gives two functions
$$SW_{\omega,\pm}: H^2(X;\mathbb{Z})\rightarrow \mathbb{Z}$$ which are defined as follows. We first need a Riemannian metric $g$ and a spin$^c$ structure which gives the spinor bundles $W^{\pm}$ with $\text{det}(W^+)=\text{det}(W^-)=\mathcal{L}$. By our choice of $\omega$, there is a bijection between spin$^c$ structures and $H^2(X;\mathbb{Z})$ by associating $\mathcal{L}$ to $B:=\frac{1}{2}(c_1(\mathcal{L})-c_1(X,\omega))$. Then for a self-dual $2$-form $\eta$, the Seiberg-Witten equations are for a pair $(A,\phi)$ consisting of a connection $A$ of $\mathcal{L}$ and a section $\phi$ of $W^+$ which are given by
$$\begin{cases}
    D_A\phi=0\\
    F_A^+=iq(\phi)+i\eta,
\end{cases}$$
where $D_A$ is the Dirac operator, $F_A^+$ is the self-dual component of the curvature $F_A$ and $q:\Gamma(W^+)\rightarrow \Omega_+^2(X)$ is a canonical map. When the choice of $(g,\eta)$ is generic, the quotient of the space of solutions by $C^{\infty}(X;S^1)$ is a compact manifold $\mathcal{M}_X(\mathcal{L},g,\eta)$ of real dimension $2d(B)$. There is a principal $S^1$-bundle $\mathcal{M}^0_X(\mathcal{L},g,\eta)$ over $\mathcal{M}_X(\mathcal{L},g,\eta)$ if we take the quotient by only elements in $C^{\infty}(X;S^1)$ mapping a base point of $X$ to $1\in S^1$. Now we can get a number by pairing the maximal cup product of the fundamental class of $\mathcal{M}_X(\mathcal{L},g,\eta)$ with the Euler class of the principal $S^1$-bundle. If $b_2^+(X)=1$, the number defined above depends on the chamber where the pair $(g,\eta)$ lives. More precisely, let $\omega_g$ denote the $g$-harmonic self-dual $2$-form whose cohomology class lies in the same component as $[\omega]$ in the positive cone. Then whether this number is the value for $SW_{\omega,+}(B)$ or $SW_{\omega,-}(B)$ is determined by the sign of the discriminant 
\[\Delta_{\mathcal{L}}(g,\eta):=\int_X (2\pi c_1(\mathcal{L})+\eta)\wedge \omega_g.\]

The following results are the key properties of these invariants we will apply.
\begin{theorem}[see \cite{LiLiuWallcrossing,LiLiuIMRN}]\label{thm:liliu}
    Let $(X,\omega)$ be a closed symplectic $4$-manifold with $b_2^+(X)=1$ and $B\in H^2(X;\mathbb{Z})$, then
    \begin{itemize}
        \item (Symmetry lemma) $|SW_{\omega,+}(B)|=|SW_{\omega,-}(-c_1(X,\omega)-B)|.$
        \item (Wall crossing formula) $|SW_{\omega,+}(B)-SW_{\omega,-}(B)|=1.$
        \item (SW=Gr for $b_2^+=1$) $Gr'_\omega (B)=SW_{\omega,-}(B)$ when the set $\{E\in \mathcal{E}_{\omega}\,|\,E\cdot B<-1\}$ is finite.
        \item (Blowup formula) Let $(\tilde{X},\tilde{\omega})$ be the symplectic blowup of $(X,\omega)$ with exceptional class $e$, then $SW_{\tilde{\omega},-}(B+le)=SW_{\omega,-}(B)$ when $d(B+le)\geq0$.
    \end{itemize}
\end{theorem}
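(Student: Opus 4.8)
The plan is to establish all four assertions from the geometry of the perturbed moduli space $\mathcal{M}_X(\mathcal{L},g,\eta)$ and its dependence on the chamber, i.e. on the sign of the discriminant $\Delta_{\mathcal{L}}(g,\eta)$, treating each by the appropriate naturality, local, or gluing argument while keeping careful track of this chamber because $b_2^+(X)=1$. For the \emph{symmetry lemma} I would use charge conjugation: sending $(A,\phi)\mapsto(\bar A,\bar\phi)$ carries solutions of the equations for the pair $(\mathcal{L},\eta)$ bijectively to solutions for $(\mathcal{L}^{-1},-\eta)$, a diffeomorphism of moduli spaces which is orientation-reversing in general (whence the absolute values). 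Since $c_1(\mathcal{L})=2B+c_1(X,\omega)$, the class $B=\tfrac12(c_1(\mathcal{L})-c_1(X,\omega))$ is sent to $-B-c_1(X,\omega)$, while $\Delta_{\mathcal{L}^{-1}}(g,-\eta)=-\Delta_{\mathcal{L}}(g,\eta)$ interchanges the two chambers; reading off the two counts gives $|SW_{\omega,+}(B)|=|SW_{\omega,-}(-c_1(X,\omega)-B)|$.

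For the \emph{wall crossing formula}, the chambers $\pm$ differ precisely by whether the perturbed equations admit a reducible solution $\phi=0$. As $(g,\eta)$ crosses the wall $\{\Delta_{\mathcal{L}}=0\}$ exactly one reducible appears (here one uses $b_1=0$, so it is unique and isolated), and the jump $SW_{\omega,+}(B)-SW_{\omega,-}(B)$ equals its local contribution. A Kuranishi model near the reducible identifies that contribution with the Euler number of a complex obstruction bundle over the finite-dimensional projective link, which for $b_1=0$ equals $\pm1$; hence the magnitude is $1$.

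The \emph{SW}$=$\emph{Gr} statement is Taubes' theorem adapted to $b_2^+=1$. I would run Taubes' large-perturbation analysis with $\eta=r\omega$ and let $r\to\infty$: solutions concentrate along $\omega$-tame $J$-holomorphic curves Poincar\'e dual to $B$, yielding a sign- and multiplicity-preserving bijection between SW solutions and the configurations counted by $Gr_\omega$; passing from $Gr_\omega$ to $Gr'_\omega$ absorbs the multiply-covered exceptional spheres, which is exactly why the finiteness hypothesis on $\{E\in\mathcal{E}_\omega\,|\,E\cdot B<-1\}$ is needed. The \emph{blowup formula} I would obtain by gluing: the exceptional $\overline{\mathbb{CP}^2}$ end carries a unique model solution, and solutions on $\tilde X$ in class $B+le$ correspond to solutions on $X$ in class $B$ glued to this model, which contributes a factor $1$ in the relevant range $d(B+le)\ge0$; because $e^2=-1$ the operation preserves $b_2^+$ and the restriction of $\omega_g$ identifies the $-$ chamber on $\tilde X$ with that on $X$. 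Alternatively the blowup formula follows from the previous item together with the definition of $Gr'$, since $e$ is rigid with $d(e)=0$ and is removed by the prime operation.

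The recurring difficulty is the $b_2^+=1$ chamber bookkeeping: unlike the $b_2^+>1$ case each identity must track the sign of $\Delta_{\mathcal{L}}$, and the genuine crux is Li--Liu's identification of Taubes' large-$r$ chamber with the label $-$ fixed above, which is what makes $Gr'_\omega=SW_{\omega,-}$ hold on the nose. Once that identification is secured, symmetry and the blowup formula reduce to naturality and gluing arguments, while wall crossing is the single local computation described above.
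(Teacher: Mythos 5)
The first thing to say is that the paper contains no proof of this statement: it is imported verbatim from Li--Liu (with the $Gr'$ modification due to McDuff), and the citation \emph{is} the paper's proof. So there is no internal argument to match your proposal against; what can be assessed is whether your sketch is a faithful and complete outline of the literature proofs. At the level of a roadmap it largely is: charge conjugation $(A,\phi)\mapsto(\bar A,\bar\phi)$ with $c_1(\mathcal{L})\mapsto -c_1(\mathcal{L})$ and $\Delta_{\mathcal{L}^{-1}}(g,-\eta)=-\Delta_{\mathcal{L}}(g,\eta)$ is exactly how the symmetry lemma is proved; and you correctly single out the genuine crux of the third bullet, namely that Taubes' large-$r$ perturbation $\eta=r\omega$ forces the sign of the discriminant and so identifies Taubes' chamber with the $-$ chamber, which is what pins $Gr'_\omega=SW_{\omega,-}$ to the labels fixed in the paper. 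Your alternative derivation of the blowup formula from SW$=$Gr plus the definition of $Gr'$ is also legitimate in the range $d(B+le)\geq 0$.

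The genuine gap is in your wall-crossing step, and you flag it yourself in passing: the parenthetical ``(here one uses $b_1=0$, so it is unique and isolated)'' imports a hypothesis that the theorem as stated does not grant. A closed symplectic $4$-manifold with $b_2^+(X)=1$ can have $b_1>0$ (irrational ruled surfaces, torus bundles over tori), and then the reducibles along the wall $\{\Delta_{\mathcal{L}}=0\}$ form not an isolated point but a $b_1$-dimensional torus of flat-twisted solutions; the jump $SW_{\omega,+}(B)-SW_{\omega,-}(B)$ is the evaluation of a characteristic class over this Picard torus (Li--Liu's general wall-crossing formula), and its value depends on $B$ through cup products with $H^1$ --- it can be a binomial-type coefficient different from $\pm1$, and can even vanish. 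So your local Kuranishi computation proves the second bullet only under $b_1=0$; to prove the statement as written you would have to either run the general Li--Liu computation or add $b_1=0$ to the hypotheses (which is harmless for the paper's applications, since $S$ and $\tilde S$ have $\pi_1=\mathbb{Z}_2$). A secondary caveat: ``Taubes' theorem adapted to $b_2^+=1$'' compresses a theorem-sized amount of work --- Taubes proved SW$=$Gr for $b_2^+>1$, and the chambered version together with the $Gr\to Gr'$ correction (where the finiteness of $\{E\in\mathcal{E}_\omega\,|\,E\cdot B<-1\}$ is needed in the first place to make the class $B'$ well defined, not merely to ``absorb'' multiple covers) is separate substantial content of Li--Liu and McDuff, so this bullet of your sketch is a pointer to the literature rather than an argument.
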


Note that for manifolds with non-negative Kodaira dimension, by \cite[Corollary 3]{Lismooth} the set $\{E\in \mathcal{E}_{\omega}\,|\,E\cdot B<-1\}$ is always finite. For rational manifolds $\mathbb{CP}^2\#n\overline{\mathbb{CP}^2}$, the subtlety around the infinity of this set was further investigated in \cite{tamecapacity}. Now we determine all the classes with non-trivial invariants for Enriques surface $S$ and its one-point blowup $\tilde{S}$.
\begin{prop}\label{prop:basicclass}
   Let $S$ and $\tilde{S}$ be equipped with the symplectic forms $\omega$ and $\tilde{\omega}$ whose cohomology classes live in $\mathcal{P}_S'$ and $\mathcal{P}_{\tilde{S}}'$ respectively. Let $B\in H^2(S;\mathbb{Z})$ and $\tilde{B}=B+le\in H^2(S;\mathbb{Z})\oplus \mathbb{Z}e= H^2(\tilde{S};\mathbb{Z})$. Denote by $B_f$ the torsion-free part of $B$. Assume further that $\tilde{\omega}\cdot e>0$. Then\begin{itemize}
       \item $Gr_{\omega}(B)\neq 0 \iff Gr'_{\omega}(B)\neq 0 \iff SW_{\omega,-}(B)\neq 0 \iff B_f\in\overline{\mathcal{P}_S'},$
       \item $Gr_{\tilde{\omega}}(\tilde{B})\neq 0 \iff B^2\geq l^2-l,l\leq 1\text{ and }B_f\in\overline{\mathcal{P}_S'},$
       \item $Gr'_{\tilde{\omega}}(\tilde{B})\neq 0 \iff SW_{\tilde{\omega},-}(\tilde{B})\neq 0 \iff B_f\in\overline{\mathcal{P}_S'} \text{ and either } B^2\geq l^2-l \text{ or } l\geq 2.$
   \end{itemize}
\end{prop}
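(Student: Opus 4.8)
The plan is to reduce all six invariants to the small--perturbation Seiberg--Witten invariants $SW_{\omega,-}$ and $SW_{\tilde\omega,-}$ and then to pin down their non-vanishing locus by playing the symmetry lemma and the wall-crossing formula of Theorem~\ref{thm:liliu} against a single base value. First I record the arithmetic. Since $c_1(S,\omega)$ is torsion, $d(B)=\tfrac12 B_f^2$, and since $\tilde\omega\cdot e>0$ forces $c_1(\tilde S,\tilde\omega)=c_1(S,\omega)-e$, a direct expansion gives $d(\tilde B)=\tfrac12(B^2-l^2+l)$; thus the inequality $B^2\ge l^2-l$ is literally the condition $d(\tilde B)\ge0$, which is necessary for any Gromov--Taubes count to be non-zero. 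For the first bullet, minimality of the Enriques surface gives $\mathcal{E}_\omega=\emptyset$, so $Gr_\omega=Gr'_\omega$ and the finiteness hypothesis in the $SW=Gr$ statement is vacuous; hence the three invariants agree as integers and it remains to prove $SW_{\omega,-}(B)\ne0\iff B_f\in\overline{\mathcal{P}_S'}$. The forward implication is geometric: for $B_f\ne 0$, non-vanishing of $Gr_\omega(B)$ yields a non-empty embedded $J$-holomorphic representative whose components all have non-negative square (by minimality and \cite{McDufflecturenotes}), forcing $[\omega]\cdot B_f>0$ and $B_f^2=2d(B)\ge0$, i.e. $B_f\in\overline{\mathcal{P}_S'}$.

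For the reverse implication I use $SW_{\omega,-}(0)=Gr_\omega(0)=1$ to settle $B=0$, and for $B_f\ne0$ I argue as follows. The symmetry lemma identifies $|SW_{\omega,+}(B)|$ with $|SW_{\omega,-}(-c_1-B)|$, and the torsion-free part of $-c_1-B$ is $-B_f$; since $B_f\in\overline{\mathcal{P}_S'}\setminus\{0\}$ is forward, $-B_f$ is anti-forward, so the already-established forward implication (applied to the non-torsion class $-c_1-B$) gives $SW_{\omega,-}(-c_1-B)=0$, whence $SW_{\omega,+}(B)=0$. The wall-crossing formula, which reads $|SW_{\omega,+}(B)-SW_{\omega,-}(B)|=1$ because $b_1(S)=0$ and $d(B)\ge0$, then forces $|SW_{\omega,-}(B)|=1\ne0$. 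This completes the first bullet.

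The second bullet is handled directly on $\tilde S$. The dimension bound already supplies $B^2\ge l^2-l$. The constraint $l\le 1$ comes from positivity of intersections: if $Gr_{\tilde\omega}(\tilde B)\ne0$ then for generic $\tilde J$ there is an embedded representative $\Sigma$ whose negative components are exceptional spheres, and the unique exceptional class of positive $\tilde\omega$-area is $e$ (by \cite{Lismooth} and the discussion preceding Corollary~\ref{cor:diffonblowup}). Comparing $\Sigma$ with the rigid $\tilde J$-holomorphic $(-1)$-sphere $Z$ in class $e$ and using $e\cdot\tilde B=-l$: if $Z\not\subseteq\Sigma$ then $e\cdot\tilde B=Z\cdot\Sigma\ge0$, while if $Z\subseteq\Sigma$ then $e\cdot\tilde B=-1+Z\cdot(\Sigma\setminus Z)\ge-1$; either way $l\le1$. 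Conversely, when $l\le1$ one has $e\cdot\tilde B=-l\ge-1$, so no exceptional component is removed, $Gr_{\tilde\omega}(\tilde B)=Gr'_{\tilde\omega}(\tilde B)=SW_{\tilde\omega,-}(\tilde B)$, and the blow-up formula (applicable as $d(\tilde B)\ge0$) gives $SW_{\tilde\omega,-}(B+le)=SW_{\omega,-}(B)$, non-zero iff $B_f\in\overline{\mathcal{P}_S'}$ by the first bullet. The third bullet now follows by feeding the $Gr'$-modification into the previous two: $Gr'_{\tilde\omega}=SW_{\tilde\omega,-}$ by $SW=Gr$ ($\mathcal{E}_{\tilde\omega}=\{e\}$ is finite); when $l\le 1$ no modification occurs and the second bullet applies, while when $l\ge2$ one has $e\cdot\tilde B=-l<-1$, so $\tilde B'=\tilde B-le=B$ and $Gr'_{\tilde\omega}(\tilde B)=Gr_{\tilde\omega}(B)$, which by the $l=0$ case of the second bullet is non-zero iff $B_f\in\overline{\mathcal{P}_S'}$. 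Unioning the two cases gives exactly ``$B_f\in\overline{\mathcal{P}_S'}$ and ($B^2\ge l^2-l$ or $l\ge2$)''.

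The main obstacle is the reverse implication of the first bullet, where essentially all of the content is extracted from the single identity $SW_{\omega,-}(0)=1$ by the symmetry/wall-crossing interplay; one must check carefully that the wall-crossing number is exactly $1$ (this is where $b_1=0$ enters) and that $d(B)\ge0$ holds wherever the formula is invoked. A second point requiring care is the positivity step bounding $l$, which must be set up for the generic $\tilde J$ defining $Gr_{\tilde\omega}$ so that $Z$ and the components of $\Sigma$ are simultaneously $\tilde J$-holomorphic. Finally there is a genuine edge case at the non-trivial torsion class $B=K_S$: since $K_S$ is not effective one computes $SW_{\omega,-}(K_S)=Gr_\omega(K_S)=0$ although $(K_S)_f=0\in\overline{\mathcal{P}_S'}$, so the stated equivalence should be understood with this single torsion spin$^c$ structure excluded; this causes no harm, since only classes with $B_f^2>0$ intervene in the cone comparison.
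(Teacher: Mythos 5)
Your proof is correct and follows essentially the same route as the paper's: reduce all the invariants to $SW_{\omega,-}$ via minimality and SW$=$Gr, settle the reverse implication of the first bullet by playing the symmetry lemma against the wall-crossing formula, bound $l$ by positivity of intersections with the exceptional sphere, and handle $Gr'$ for $l\geq 2$ through the defining modification $\tilde{B}\mapsto\tilde{B}-le$. Your remark about the nontrivial torsion class $B=K_S$ flags a genuine (but harmless) edge case that the paper's own proof also glosses over --- there $\omega\cdot(-c_1(S,\omega)-B)=0$ rather than $<0$, and indeed $Gr_\omega(K_S)=0$ while $(K_S)_f=0\in\overline{\mathcal{P}_S'}$ --- so the stated equivalence should be read with that single spin$^c$ structure excluded, which affects none of the later applications since those only involve classes with $B_f^2>0$.
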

\begin{proof}
    For the first item, the minimality of $S$ and SW=Gr in Theorem \ref{thm:liliu} give the first two equivalence. Also note that a class $B$ with non-trivial invariants must have non-negative $d(B)=\frac{1}{2}(B^2+c_1(S,\omega)\cdot B)=\frac{1}{2}B^2$ and $\omega\cdot B>0$ since $B$ can be represented by embedded symplectic surfaces, which implies $B_f\in\overline{\mathcal{P}_S'}$ by the assumption that $[\omega]\in\mathcal{P}_S'$. On the other hand, if $B_f\in\overline{\mathcal{P}_S'}$, then $\omega\cdot(-c_1(S,\omega)-B)<0$ which implies $Gr_{\omega}(-c_1(S,\omega)-B)=0$. Now by symmetry lemma and wall crossing formula in Theorem \ref{thm:liliu}, $SW_{\omega,-}(B)\neq 0$. This shows the third equivalence.\\
    For the second item, note that the condition $\tilde{\omega}\cdot e>0$ guarantees $c_1(\tilde{S},\tilde{\omega})=c_1(S,\omega)-e$. Assuming $Gr_{\tilde{\omega}}(\tilde{B})\neq 0$, then we firstly have $0\leq d(\tilde{B})=B^2-l^2+l$. If $l\geq 2$, then by positivity of intersection and $Gr_{\omega'}(e)\neq 0$ (can be seen from the blowup formula), any embedded pseudoholomorphic representative of the class $B$ must have a component of the exceptional class $e$ with multiplicity $\geq 2$, which doesn't appear for generic almost complex structures. This shows $l\leq 1$ and further implies $\{E\in \mathcal{E}_{\tilde{\omega}}\,|\,E\cdot \tilde{B}<-1\}$ is empty. So $0\neq Gr_{\tilde{\omega}}(\tilde{B})=Gr'_{\tilde{\omega}}(\tilde{B})=SW_{\tilde{\omega},-}(\tilde{B})=SW_{\omega,-}(B)$ by Theorem \ref{thm:liliu}. The first item then implies $B_f\in\overline{\mathcal{P}_S'}$. The another direction also easily follows from the similar argument.\\
    For the third item, just note that $ Gr'_{\tilde{\omega}}(\tilde{B})=Gr_{\tilde{\omega}}(\tilde{B}-le)$ when $l\geq 2$ by the definition of $Gr'$ and apply the first and the second item.
    
\end{proof}

The following corollary immediately follows from the definition of Gromov-Taubes invariant.

\begin{corollary}\label{cor:embeddedrep}
Under the same assumption of Proposition \ref{prop:basicclass}, any class $\tilde{B}=B+le\in H^2(S;\mathbb{Z})\oplus \mathbb{Z}e= H^2(\tilde{S};\mathbb{Z})$ satisfying $B^2\geq l^2-l,l\leq 0\text{ and }B_f\in\overline{\mathcal{P}_S'}$ can be represented by a connected embedded $\tilde{\omega}$-symplectic surface in $(\tilde{S},\tilde{\omega})$.
\end{corollary}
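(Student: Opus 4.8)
The statement to prove is Corollary~\ref{cor:embeddedrep}: that any class $\tilde{B}=B+le$ satisfying $B^2\geq l^2-l$, $l\leq 0$, and $B_f\in\overline{\mathcal{P}_S'}$ is represented by a \emph{connected} embedded $\tilde{\omega}$-symplectic surface. The natural strategy is to deduce non-vanishing of the Gromov--Taubes invariant from Proposition~\ref{prop:basicclass} and then upgrade the resulting (possibly disconnected) pseudoholomorphic representative to a connected embedded symplectic surface.

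\smallskip

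The plan is as follows. First I would verify that the hypotheses $B^2\geq l^2-l$, $l\leq 0$, and $B_f\in\overline{\mathcal{P}_S'}$ fall under the second bullet of Proposition~\ref{prop:basicclass}: since $l\leq 0\leq 1$ the condition $l\leq 1$ holds automatically, so $Gr_{\tilde{\omega}}(\tilde{B})\neq 0$. By the definition of the Gromov--Taubes invariant recalled before Proposition~\ref{prop:basicclass}, non-vanishing for a nonzero class $\tilde{B}$ produces, for a generic $\tilde{\omega}$-tame almost complex structure $J$, an embedded (possibly disconnected, possibly with multiplicities) $J$-holomorphic representative of $\tilde{B}$. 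By \cite[Theorem 1.2]{McDufflecturenotes}, quoted in the same paragraph, for generic $J$ every component of negative self-intersection must be an exceptional sphere, and the only classes of smoothly embedded $(-1)$-spheres in $\tilde{S}$ are $\pm e$.

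\smallskip

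The key step is ruling out exceptional-sphere components and multiple covers so that the representative is a genuine connected embedded symplectic surface. Because $l\leq 0$ we compute $\tilde{B}\cdot e = B\cdot e - l e\cdot e = l \geq \dots$; more precisely $\tilde{B}\cdot e=-l\geq 0$ (with the sign convention $e^2=-1$), so $\tilde{B}$ pairs non-negatively with the exceptional class $e$, which should obstruct $e$ from appearing as a component by positivity of intersections (an embedded $J$-holomorphic representative meeting the $e$-curve with the wrong sign cannot occur). Similarly $\tilde{B}\cdot(-e)=l\leq 0$ handles the class $-e$. Hence no exceptional component can split off, and the embedded representative has all components of non-negative self-intersection; I would then invoke a standard connectedness/smoothing argument for such configurations (the components can be symplectically resolved at their intersection points into a single connected embedded symplectic surface, since they are all $J$-holomorphic and meet positively). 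The positivity arrangement showing that no $\pm e$ component appears is where I expect the real care to be needed, since one must track the sign conventions for $e^2$ and the direction of $\tilde{\omega}\cdot e>0$ carefully.

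\smallskip

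The main obstacle will be the passage from ``embedded, possibly disconnected, possibly multiply-covered'' to ``connected and embedded without multiplicity.'' The Gromov--Taubes count a priori allows disjoint unions and multiple covers of tori, and the corollary asserts a single connected surface. Resolving this requires either that the hypotheses force the representative to already be connected (via the dimension count $d(\tilde{B})\geq 0$ together with $B_f\in\overline{\mathcal{P}_S'}$ controlling how $\tilde{B}$ can decompose into classes of non-negative Gromov--Taubes dimension), or an explicit symplectic resolution of the crossings combined with a check that no spurious disjoint sphere or multiply-covered torus survives for generic $J$. I would lean on the fact that all components have non-negative self-intersection and pair positively with $\tilde{\omega}$, so that a connected sum of the components along their intersection points yields the desired connected embedded symplectic representative.
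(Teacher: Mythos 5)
The first two-thirds of your plan matches the paper's proof: you get $Gr_{\tilde{\omega}}(\tilde{B})\neq 0$ from the second bullet of Proposition \ref{prop:basicclass} (since $l\leq 0\leq 1$), and you rule out exceptional-sphere components using $l\leq 0$ and positivity of intersection, exactly as the paper does. (To make that step precise: the components of a Gromov--Taubes representative are pairwise \emph{disjoint} embedded curves, so if one component lay in class $e$ the remaining components would pair trivially with $e$, forcing $\tilde{B}\cdot e=le^2=-l<0$ when that component is present with positive multiplicity --- contradicting $-l\geq 0$.)

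The gap is in your final step. You propose to obtain connectedness by ``symplectically resolving the crossings'' and taking a connected sum of the components along their intersection points. But, as just noted, the embedded representative produced by the non-vanishing of $Gr_{\tilde{\omega}}$ consists of \emph{disjoint} components --- there are no intersection points to smooth, and disjoint surfaces cannot be merged into one connected surface this way. The argument the paper actually uses is the one you mention only parenthetically and do not carry out: once exceptional spheres are excluded, every component has non-negative self-intersection and pairs positively with $\tilde{\omega}$, so its class lies in $\overline{\mathcal{P}'_{\tilde{S}}}$; since $b_2^+(\tilde{S})=1$, two disjoint such components would give two classes in the closed forward cone with intersection number zero, which the light cone lemma forbids (away from the degenerate proportional-boundary case). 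Hence there is exactly one component. You should replace the smoothing argument with this $b_2^+=1$ argument; the ``dimension count'' variant you sketch ($d(\tilde{B})\geq 0$ controlling decompositions) is not what does the work here.
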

\begin{proof}
    From Proposition \ref{prop:basicclass} we know $Gr_{\tilde{\omega}}(\tilde{B})\neq 0$ so that $\tilde{B}$ can be represented by embedded symplectic surface whose components with negative self-intersection are exceptional spheres. But the condition $l\leq 0$ doesn't allow the appearance of exceptional spheres by positivity of intersection. So all the components must have non-negative self-intersection, which implies the number of components must be one since $b_2^+(\tilde{S})=1$. 
\end{proof}

\subsection{Symplectic cone of $\tilde{S}$}
In \cite{LiLiucone}, the symplectic cone has been determined for all smooth $4$-manifolds which admit symplectic structures and have $b_2^+=1$. Here we recall its proof, specializing on the manifold $\tilde{S}$. The main ingredient for constructing symplectic forms is the inflation procedure of adding suitable Thom forms near a symplectic submanifold with positive self-intersection.

\begin{lemma}[\cite{deftoiso}]\label{lem:inflation}
    Let $(X,\omega)$ be a symplectic $4$-manifold and $B\in H^2(X;\mathbb{Z})$. Assume $B^2>0$ and $B$ can be represented by a connected embedded $\omega$-symplectic surface. Denote by $B_f$ the torsion free part of $B$.  Then for all $t>0$, the cohomology class $[\omega]+t B_f$ can be realized by a symplectic form.
\end{lemma}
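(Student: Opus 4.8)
The plan is to prove this by the standard \emph{inflation} construction: from the given symplectic surface I manufacture a closed $2$-form, Poincar\'{e} dual to $B_f$, that is non-negative on complex tangent lines, and then add positive multiples of it to $\omega$.

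First I would fix an $\omega$-compatible almost complex structure $J$ for which the connected embedded $\omega$-symplectic surface $Z$ representing $B$ is $J$-holomorphic; this is always possible, since along any embedded $\omega$-symplectic surface one can choose $J$ preserving $TZ$ and its $\omega$-orthogonal complement and then extend $J$ over $X$. The triple $(X,\omega,J)$ is then almost K\"{a}hler, and $Z$ is a smooth $J$-holomorphic curve (a Riemann surface) with $[Z]=B$ and $Z\cdot Z=B^2>0$. In particular its normal bundle $\nu_Z$, regarded as a complex line bundle via $J$, has degree $\deg\nu_Z=B^2>0$.

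Next I would build the inflating form. By the symplectic neighborhood theorem, identify a tubular neighborhood $N$ of $Z$ with a disk subbundle of $\nu_Z$ carrying its fiberwise-linear symplectic structure. Choose a Hermitian connection on $\nu_Z$ whose curvature form $\Omega$ is positive (possible precisely because $\deg\nu_Z>0$, so that $\int_Z\Omega=B^2>0$ with $\Omega$ a positive multiple of the area form on $Z$), with global angular $1$-form $\alpha$ on $N\setminus Z$ satisfying $d\alpha=-\pi^*\Omega$ and $\int_{\text{fiber}}\alpha=1$. Setting
\[\tau:=d\bigl(\beta(r)\,\alpha\bigr)=\beta'(r)\,dr\wedge\alpha-\beta(r)\,\pi^*\Omega,\]
where $r$ is the radial fiber coordinate and $\beta$ is a cutoff with $\beta(0)=-1$, $\beta\equiv 0$ near $\partial N$, and $\beta'\geq 0$, yields a closed $2$-form compactly supported in $N$ that represents the Thom class of $\nu_Z$; hence $[\tau]=\mathrm{PD}[Z]=B_f$ in $H^2(X;\mathbb{R})$, torsion being invisible over $\mathbb{R}$. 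Because $\beta\leq 0$ and $\Omega\geq 0$, the term $-\beta\,\pi^*\Omega$ is non-negative, and $\beta'\,dr\wedge\alpha$ is a non-negative multiple of the fiber area form; consequently $\tau$ is a non-negative $(1,1)$-form, i.e. $\tau(v,Jv)\geq 0$ for every tangent vector $v$, and $\tau$ restricts to the positive form $\Omega$ along $Z$ itself.

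Granting the non-negativity of $\tau$, the conclusion is immediate: for each $t>0$ the $2$-form $\omega_t:=\omega+t\tau$ is closed, and it tames $J$ since
\[\omega_t(v,Jv)=\omega(v,Jv)+t\,\tau(v,Jv)>0\qquad(v\neq 0),\]
so $\omega_t$ is non-degenerate and therefore symplectic, with $[\omega_t]=[\omega]+t\,B_f$. The main obstacle is the construction of the inflating form $\tau$ together with the verification that it is non-negative on all $J$-complex lines simultaneously; this is the heart of inflation and is exactly where the hypothesis $B^2>0$ is indispensable, since it both forces $\deg\nu_Z>0$ (allowing a positive curvature $\Omega$) and guarantees that the Thom form acquires no negative contribution along $Z$. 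The remaining cohomological bookkeeping, in particular identifying $[\tau]$ with the torsion-free class $B_f$ rather than with $B$, is then routine.
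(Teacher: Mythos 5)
The paper does not prove this lemma at all; it is quoted verbatim from \cite{deftoiso}, so there is no in-paper argument to compare against. Your proposal reconstructs the standard inflation argument of that reference, and the overall architecture (make $Z$ $J$-holomorphic, build a Thom form $\tau$ supported near $Z$ with $[\tau]=\mathrm{PD}[Z]=B_f$, check $\tau\geq 0$ on $J$-complex lines, and conclude that $\omega+t\tau$ tames $J$) is the right one. The cohomological bookkeeping, the computation $\deg\nu_Z=B^2>0$, and the final taming argument are all correct.

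There is, however, one step where your inference is incomplete, and it is exactly the step you yourself flag as ``the heart of inflation.'' You claim that $\tau=\beta'\,dr\wedge\alpha-\beta\,\pi^*\Omega$ is non-negative on $J$-complex lines because $\beta\leq 0$, $\beta'\geq 0$ and $\Omega>0$. That does not follow for the $J$ you fixed at the outset (an arbitrary $\omega$-compatible $J$ making $Z$ holomorphic). For $\pi^*\Omega(v,Jv)\geq 0$ you need $d\pi$ to carry $J$-complex lines to $j$-complex lines, i.e.\ $\pi$ must be $(J,j)$-holomorphic; and for $\beta'\,dr\wedge\alpha$ to be non-negative you need $J$ to preserve the vertical distribution of the disk bundle and the horizontal distribution of the chosen connection, so that $dr\wedge\alpha$ vanishes on horizontal complex lines and restricts to a positive area form on the fibers. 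A generic compatible $J$ with $Z$ holomorphic satisfies none of this away from $Z$ itself. The standard fix is to reverse the order of quantifiers: first fix the tubular neighborhood with its fibered structure and the connection, then choose $J$ on a possibly smaller neighborhood to be \emph{fibered} (preserving the vertical/horizontal splitting and restricting to $j$ on $Z$) and extend it over $X$; one then checks that such a $J$ is still tamed by $\omega$ near $Z$ after shrinking the neighborhood, and taming (rather than compatibility) is all your final displayed inequality uses. With that adjustment your argument closes up and gives the lemma for all $t>0$, exactly as stated.
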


Combined with the calculation of the classes with non-trivial invariants from the previous section, we then have the following.
\begin{prop}
    Any cohomology class $a\in \mathcal{P}'_{\tilde{S}}$ with $a\cdot e\neq 0$ can be realized by a symplectic form on $\tilde{S}$.
\end{prop}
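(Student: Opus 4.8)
The plan is to run the inflation construction of \cite{LiLiucone} in the specific geometry of $\tilde{S}$, producing the desired symplectic form from a Kähler one by a sequence of inflations whose directions are supplied by Corollary \ref{cor:embeddedrep}. First I would reduce to the case $a\cdot e>0$. Since the reflection $R_e$ along the $(-1)$-class $e$ is induced by a self-diffeomorphism of $\tilde{S}$ (Corollary \ref{cor:diffonblowup}) and the symplectic cone is invariant under $\mathrm{Diff}^+(\tilde{S})$, if $a\cdot e<0$ I may replace $a$ by $R_e(a)$, which satisfies $R_e(a)\cdot e=-a\cdot e>0$, represent $R_e(a)$ symplectically, and transport the form back by the diffeomorphism. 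Writing $a=\alpha-\lambda e$ with $\alpha\in H^2(S;\mathbb{R})$ and $\lambda=a\cdot e>0$, the hypothesis $a\in\mathcal{P}'_{\tilde{S}}$ gives $\alpha^2-\lambda^2=a^2>0$, so $\alpha\in\mathcal{P}'_S$ and $\alpha^2>\lambda^2$.

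Next I would produce a convenient starting symplectic form. Choosing a \emph{general} complex structure $J$ on $S$ (period in $\mathcal{D}_{gen}$), there are no smooth $(-2)$-curves, so by the Nakai--Moishezon criterion the ample cone of $(S,J)$ is the entire forward cone $\mathcal{P}'_S$; in particular $c\alpha$ is a Kähler class for every $c>0$. Blowing up a point and taking $\epsilon>0$ small enough that the blown-up class is Kähler, the class $a_0:=c\alpha-\epsilon e$ is represented by a Kähler form $\tilde{\omega}_0$ on $\tilde{S}$, and satisfies $a_0\cdot e=\epsilon>0$ (so $c_1(\tilde{S},\tilde{\omega}_0)=K-e$, consistent with Proposition \ref{prop:basicclass}). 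I will fix $c$ and $\epsilon$ small enough that $(1-c)^2\alpha^2>(\lambda-\epsilon)^2$ and $\lambda>\epsilon$, which is possible because $\alpha^2>\lambda^2$.

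The core of the argument is to reach $a$ from $a_0$ by inflation. Consider the difference $a-a_0=(1-c)\alpha-(\lambda-\epsilon)e$: it is forward with $(a-a_0)^2=(1-c)^2\alpha^2-(\lambda-\epsilon)^2>0$ by the choice above, and $(a-a_0)\cdot e=\lambda-\epsilon>0$. By Corollary \ref{cor:embeddedrep}, every integral class $\tilde{B}=B+le$ with $l\le 0$, $B_f\in\overline{\mathcal{P}'_S}$ and $B^2\ge l^2-l$ (which forces $\tilde{B}^2>0$) admits a connected embedded symplectic representative, for \emph{any} symplectic form whose class lies in $\mathcal{P}'_{\tilde{S}}$ and pairs positively with $e$. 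Such integral classes positively span the interior of the open convex cone $\{v:v^2>0,\ v\text{ forward},\ v\cdot e\ge 0\}$ (clearing denominators and enlarging each class by a large integer factor to absorb the linear term $-l$), and $a-a_0$ lies in that interior, so I can write $a-a_0=\sum_i t_i\tilde{B}_i$ as a finite combination with $t_i>0$ and each $\tilde{B}_i$ of the above type. I then inflate iteratively via Lemma \ref{lem:inflation}: at each stage the current class still lies in $\mathcal{P}'_{\tilde{S}}$ and still pairs positively with $e$, since inflating along $\tilde{B}_i=B_i+l_i e$ changes the $e$-pairing by $t_i(-l_i)\ge 0$; hence Corollary \ref{cor:embeddedrep} again furnishes a connected embedded symplectic surface in class $\tilde{B}_i$, and Lemma \ref{lem:inflation} adds $t_i(\tilde{B}_i)_f$. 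After all steps the resulting class is exactly $a_0+\sum_i t_i\tilde{B}_i=a$.

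The main obstacle is the reachability bookkeeping in the last paragraph: inflation can only \emph{add} classes, so the construction must be arranged so that $a-a_0$ sits in the convex cone of inflatable directions and remains inside it at every intermediate stage, which is what dictates the alignment of the starting class $a_0$ along the $\alpha$-ray and the smallness of $c,\epsilon$. The essential geometric input is Corollary \ref{cor:embeddedrep} with $l<0$, which provides symplectic surfaces in classes $B+le$ carrying a negative multiple of the exceptional class; these are precisely the representatives unavailable in Kähler geometry, and they are what allow the exceptional area $\lambda$ to be pushed all the way up to $\sqrt{\alpha^2}$, beyond any Kähler bound.
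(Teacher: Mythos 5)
Your argument is correct and takes essentially the same route as the paper: reduce to $a\cdot e>0$ via the reflection along $e$, use Corollary~\ref{cor:embeddedrep} to produce a connected embedded symplectic surface, and apply Lemma~\ref{lem:inflation} to inflate from a known symplectic form. The only difference is the reachability bookkeeping you flag as the main obstacle: the paper sidesteps both the careful alignment of the starting class $a_0$ and the convex decomposition with iterated inflations by first using openness of the symplectic cone to assume $a$ integral, and then noting that for a sufficiently large integer $P$ the single class $Pa-[\tilde{\omega}]$ already satisfies the hypotheses of Corollary~\ref{cor:embeddedrep} (for an arbitrary starting form $\tilde{\omega}$), so one inflation lands on $Pa$ and a final rescaling finishes the proof.
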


\begin{proof}
By rescaling and density of rational classes, without loss of generality we may assume $a$ is an integral class since the symplectic cone is open. Also, since there is the diffeomorphism coming from the reflection along the exceptional sphere in class $e$, we may further assume $a\cdot e>0$. Now pick an arbitrary integral symplectic form $\tilde{\omega}$ whose cohomology class belongs to $\mathcal{P}'_{\tilde{S}}$. Then for sufficiently large positive integer $P$, the class $Pa-[\tilde{\omega}]$ will have positive square and satisfy the conditions in Corollary \ref{cor:embeddedrep}. Thus it will be represented by a connected embedded $\tilde{\omega}$-symplectic surface and Lemma \ref{lem:inflation} can be applied to produce the symplectic form realizing the class $[\tilde{\omega}]+Pa-[\tilde{\omega}]=Pa$. By rescaling $a$ can be realized as well.
\end{proof}

For the convenience of stating our comparison theorem later, we introduce the function $\mathcal{S}:\Delta\rightarrow\mathbb{R}^+$ defined on the fundamental chamber by
\[(b_1,\cdots,b_{10})\mapsto \sup\{t\,|\,l_0-\sum_{i=1}^{10}b_il_i-te\text{ can be realized by a symplectic form}\}.\]
\begin{corollary}\label{cor:sympcone}
    $\mathcal{S}(b_1,\cdots,b_{10})=(1-\sum_{i=1}^{10}b_i^2)^{\frac{1}{2}}.$
\end{corollary}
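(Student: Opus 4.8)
The plan is to read $\mathcal{S}$ off directly from the Proposition immediately above, which asserts that every class in $\mathcal{P}'_{\tilde{S}}$ pairing nontrivially with $e$ is realized by a symplectic form. The whole problem then reduces to determining the range of $t>0$ for which $\mathbf{b}-te$ lies in the forward cone. Since a symplectic class necessarily has positive square, the same positivity condition will supply the matching upper bound on $t$, and the two bounds together pin down the supremum.

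First I would compute the self-intersection of $\mathbf{b}-te$. Using the orthogonal splitting $H^2(\tilde{S};\mathbb{R})=H^2(S;\mathbb{R})\oplus\mathbb{R}e$ with $e^2=-1$, we have $\mathbf{b}\cdot e=0$, so
\[(\mathbf{b}-te)^2=\mathbf{b}^2-t^2.\]
Expanding $\mathbf{b}=l_0-\sum_i b_i l_i$ in the $I_{1,10}$-basis (where $l_0^2=1$, $l_i^2=-1$, $l_i\cdot l_j=0$ for $i\neq j$) gives $\mathbf{b}^2=1-\sum_{i=1}^{10}b_i^2$, hence $(\mathbf{b}-te)^2=1-\sum_i b_i^2-t^2$. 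This is positive exactly when $t<(1-\sum_i b_i^2)^{1/2}$, and the right-hand side is a genuine positive real because $\mathbf{b}\in\mathcal{P}'_S$ forces $\mathbf{b}^2>0$, i.e.\ $\sum_i b_i^2<1$.

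For the lower bound on $\mathcal{S}$, I would fix $0<t<(1-\sum_i b_i^2)^{1/2}$ and verify the hypotheses of the preceding Proposition. The pairing is $(\mathbf{b}-te)\cdot e=-t\,e^2=t\neq 0$. Moreover $\mathbf{b}-te$ has positive square, and since it equals $\mathbf{b}\in\mathcal{P}'_S\subseteq\mathcal{P}'_{\tilde{S}}$ at $t=0$ and varies continuously while keeping positive square, it remains in the same component $\mathcal{P}'_{\tilde{S}}$ throughout the interval. The Proposition then produces a symplectic form representing $\mathbf{b}-te$, so $\mathcal{S}(b_1,\cdots,b_{10})\geq(1-\sum_i b_i^2)^{1/2}$.

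For the upper bound, any symplectic form $\omega$ on a $4$-manifold satisfies $[\omega]^2=\int_{\tilde{S}}\omega\wedge\omega>0$, so a class realized symplectically must have positive square. Since $(\mathbf{b}-te)^2\leq 0$ whenever $t\geq(1-\sum_i b_i^2)^{1/2}$, no such $t$ is admissible, forcing $\mathcal{S}(b_1,\cdots,b_{10})\leq(1-\sum_i b_i^2)^{1/2}$. Combining the two bounds yields the claimed equality. I expect no serious obstacle here; the only point deserving a moment's care is the continuity-and-connectedness argument confirming that $\mathbf{b}-te$ stays in the forward component $\mathcal{P}'_{\tilde{S}}$ rather than its mirror image, since the Proposition is stated for $\mathcal{P}'_{\tilde{S}}$ specifically.
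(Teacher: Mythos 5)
Your argument is correct and is exactly the intended deduction: the paper treats the corollary as immediate from the preceding proposition, with the supremum pinned down by the positivity of $(\mathbf{b}-te)^2=1-\sum_{i=1}^{10}b_i^2-t^2$ on one side and the openness of that condition (plus $(\mathbf{b}-te)\cdot e=t\neq 0$ and membership in the forward component $\mathcal{P}'_{\tilde{S}}$) on the other. Your extra care about staying in $\mathcal{P}'_{\tilde{S}}$ via continuity is a point the paper leaves implicit but is handled correctly.
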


\begin{rmk}
    Corollary \ref{cor:sympcone} has been essentially known in \cite{LiLiucone}. It's interesting to see if there is another proof without the heavy machinery of Taubes-Seiberg-Witten theory by directly showing the unobstructedness of ball packings, possibly using the techniques of almost toric fibrations of \cite{LeungSymington}.
\end{rmk}

    \section{Proof of the main theorems}
 \begin{proof}[Proof of Theorem \ref{thm:mainthm2}]
     By Proposition \ref{prop:phi=algcapa} and Proposition \ref{prop:2phi}.
 \end{proof}

\begin{proof}[Proof of Theorem \ref{thm:mainthm1}]
     By  Theorem \ref{thm:mainthm2}, Corollary \ref{cor:diffonblowup}, Proposition \ref{prop:phi=b10} and Corollary \ref{cor:sympcone}, we only need to check there exists $(b_1,\cdots,b_{10})\in \Delta$ such that $\mathcal{S}(b_1,\cdots,b_{10})>\mathcal{K}(b_1,\cdots,b_{10})$. This is equivalent to $$(1-\sum_{i=1}^{10}b_i^2)^{\frac{1}{2}}>2b_{10}.$$For example we can take 
     $$(b_1,\cdots,b_{10})=(\frac{1}{3},\frac{1}{3},\frac{1}{3},\frac{1}{3},\frac{1}{3},\frac{1}{3},\frac{1}{3},\frac{1}{3},\frac{1}{4},\frac{1}{12}).$$ See also Figure \ref{fig:cone}. 
 \end{proof}

 \begin{figure}[h]
     \centering
     \includegraphics*[width=\linewidth]{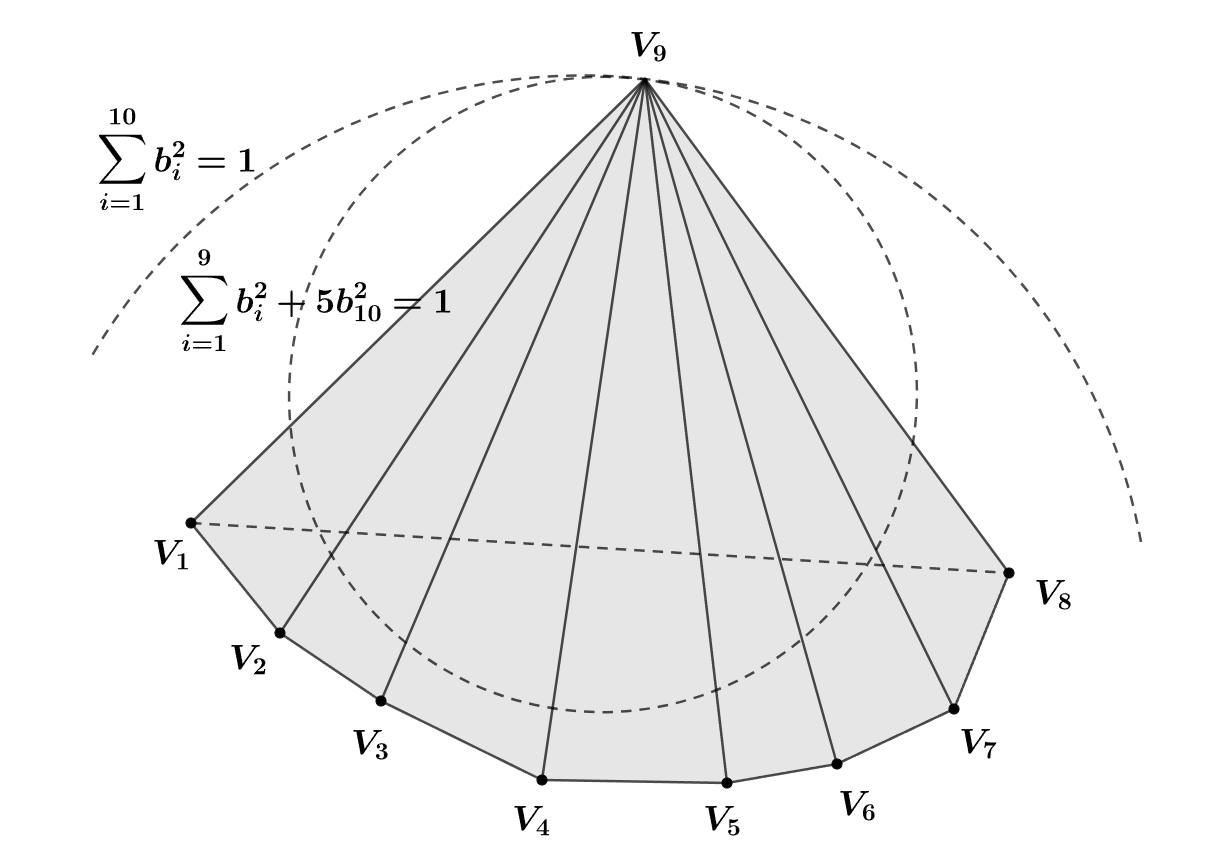}
     \caption{This is a picture for the normalized fundamental chamber $\Delta$ with $9$ vertices $V_1,\cdots.V_9$. The region enclosed by the dashed ellipse satisfies $\mathcal{K}<\mathcal{S}$, from which one can see the difference between K\"{a}hler cone and symplectic cone. }
     \label{fig:cone}
 \end{figure}


\begin{rmk}
   The recent result by \cite[Theorem 1.3]{seshadrionenriques} verifies that for a very general Enriques surface $(S,J)$, one has $\Phi=\inf _{s\in S}\varepsilon_{J,s}$. In our terminology, this means $\mathcal{K}\geq \Phi$ holds true. Therefore $\mathcal{K}$ is a function between $\Phi$ and $2\Phi$.
\end{rmk}

\bibliographystyle{amsalpha}
\bibliography{mybib}{}

\end{document}